\newtheorem{theorem}{Theorem}[section]
\newtheorem{lemma}[theorem]{Lemma}
\newtheorem{conjecture}[theorem]{Conjecture}
\theoremstyle{definition}
\newtheorem{definition}[theorem]{Definition}
\newtheorem{remark}[theorem]{Remark}
\numberwithin{equation}{section}
\DeclareMathOperator*{\esssup}{ess\,sup}
\DeclareMathOperator*{\essinf}{ess\,inf}
\newcommand{\vertiii}[1]{{\left\vert\kern-0.25ex\left\vert\kern-0.25ex\left\vert #1 
    \right\vert\kern-0.25ex\right\vert\kern-0.25ex\right\vert}}
\title{Sawyer-type inequalities for Lorentz spaces}
\begin{document}

\author[C. P\'erez]{Carlos P\'erez$^*$}
\address{Department of Mathematics, University of the Basque Country, Ikerbasque
and BCAM, Bilbao, Spain.}
\email{cperez@bcamath.org}

\author[E. Roure-Perdices]{Eduard Roure-Perdices$^{**}$}
\address{Departament de Matem\`atiques i Inform\`atica, Universitat de Barcelona, 08007 Barcelona, Spain.} 
\email{eduardroure@protonmail.ch}

\thanks{C. P. was supported by grants MTM2017-82160-C2-2-P (MINECO) and BCAM Severo Ochoa accreditation \\ \indent SEV-2017-0718 (MINECO). \\ \indent E. R. P. was supported by grant FPU14/04463 (MECD). \\ 
\indent\emph{E-mail addresses:} $^{*}$\texttt{cperez@bcamath.org}, $^{**}$\texttt{eduardroure@protonmail.ch}}

\subjclass[2010]{42B25, 46E30}

\keywords{Lorentz spaces, Sawyer-type inequalities, $A_p^{\mathcal R}$ and $A_{\vec P}^{\mathcal R}$ weights}

\begin{abstract}
The Hardy-Littlewood maximal operator satisfies the classical Sawyer-type estimate
$$
\left \Vert \frac{Mf}{v}\right \Vert_{L^{1,\infty}(uv)} \leq C_{u,v} \Vert f \Vert_{L^{1}(u)},
$$
where $u\in A_1$ and $uv\in A_{\infty}$.
We prove a novel extension of this result to the general restricted weak type case. That is, for $p>1$, $u\in A_p^{\mathcal R}$, and $uv^p \in A_\infty$,
$$
\left \Vert \frac{Mf}{v}\right \Vert_{L^{p,\infty}(uv^p)} \leq C_{u,v} \Vert f \Vert_{L^{p,1}(u)}.
$$

From these estimates, we deduce new weighted restricted weak type bounds and Sawyer-type inequalities for the $m$-fold product of Hardy-Littlewood maximal operators. We also present an innovative technique that allows us to transfer such estimates to a large class of multi-variable operators, including $m$-linear Calder\'on-Zygmund operators, avoiding the $A_\infty$ extrapolation theorem and producing many estimates that have not appeared in the literature before. In particular, we obtain a new characterization of $A_p^{\mathcal R}$. 

Furthermore, we introduce the class of weights that characterizes the restricted weak type bounds for the multi(sub)linear maximal operator $\mathcal M$, denoted by $A_{\vec P}^{\mathcal R}$, establish analogous bounds for sparse operators and m-linear Calder\'on-Zygmund operators, and study the corresponding multi-variable Sawyer-type inequalities for such operators and weights. 

Our results combine mixed restricted weak type norm inequalities, $A_p^{\mathcal R}$ and $A_{\vec P}^{\mathcal R}$ weights, and Lorentz spaces.
\end{abstract}

\maketitle

\section{Introduction}

``Sawyer-type inequalities'' is a terminology coined in the paper \cite{CUMP}, where the authors prove that if $u\in A_1$, and $v\in A_1$ or $uv\in A_\infty$, then
\begin{equation}\label{eqclassic}
    uv \left(\left\{x\in \mathbb R^n:  \frac{|T(fv)(x)|}{v(x)}> t  \right\} \right) \leq \frac{C}{t} \int_{\mathbb R^n} |f(x)|u(x)v(x)dx, \quad t>0,
\end{equation}
where $T$ is either the Hardy-Littlewood maximal operator or a linear Calder\'on-Zygmund operator. This result extends some questions previously considered by B. Muckenhoupt and R. Wheeden in \cite{MW}, and solves in the affirmative a conjecture formulated by E. Sawyer in \cite{Sa} concerning the Hilbert transform. These problems were advertised by B. Muckenhoupt in \cite{M}, where the terminology ``mixed type norm inequalities'' was introduced and was also used since then in other papers like \cite{AM} or \cite{MOS}. In general, this terminology refers to certain weighted estimates for some classical operators $T$, where a weight $v$ is included in their level sets; that is, 
\begin{equation}\label{LevelSet}
\left\{x\in \mathbb R^n:  \frac{|Tf(x)|}{v(x)}> t  \right\}, \quad t>0.
\end{equation}
The structure of such sets makes it impossible, or very difficult, to use classical tools to measure them, such as Vitali's covering lemma or interpolation theorems. 

In this paper, we consider mixed restricted weak type norm inequalities, or Sawyer-type inequalities for Lorentz spaces; that is, we study estimates of the form
\begin{equation}\label{ModelEx}
w \left(\left\{x\in \mathbb R^n:  \frac{|Tf(x)|}{v(x)}> t  \right\} \right)^{1/p} \leq \frac{C}{t} \Vert f \Vert_{L^{p,1}(u)}, \quad t>0,
\end{equation}
where $p\geq1$, $T$ is a classical operator, and $u,v,w$ are weights. We also consider extensions of such inequalities to the multi-variable setting. Our goal is to prove estimates like \eqref{ModelEx} for sub-linear and multi-sub-linear maximal operators, and multi-linear Calder\'on-Zygmund operators. Observe that in the classical situation, namely when $u=w$, $v\approx 1$, and $T$ is either the Hardy-Littlewood maximal operator or a linear Calder\'on-Zygmund operator, the inequality (\ref{ModelEx}) holds if $w\in A_p^{\mathcal R}$ (some authors use the notation $A_{p,1}$ for this class of weights, as in \cite{CHK}). The case when $v\not \approx 1$ is much more difficult, and in what follows, we will study it in great detail. 

The starting point of this paper and our primary motivation to consider Sawyer-type inequalities for Lorentz spaces comes from the study of the $m$-fold product of Hardy-Littlewood maximal operators, 
$$
M^{\otimes}(\vec f)(x):= \prod_{i=1}^m Mf_i(x), \quad x\in \mathbb R^n.
$$
M. J. Carro and E. R. P. proved in \cite{CR} that for exponents $1\leq p_1,\dots,p_m<\infty$ and $\frac{1}{p}=\frac{1}{p_1}+\dots+\frac{1}{p_m}$, and weights $w_1,\dots,w_m$ in $A_\infty$ and $\nu_{\vec w}:=w_1^{p/p_1} \dots w_m^{p/p_m}$, a necessary condition to have 
\begin{equation}\label{eqprodhl}
    M^{\otimes}:L^{p_1,1}(w_1)\times \dots \times L^{p_m,1}(w_m) \longrightarrow L^{p,\infty}(\nu_{\vec w})
\end{equation}
is that $w_i \in A_{p_i}^{\mathcal R}$, for $i=1,\dots,m$. They left as an open question to prove that this last condition is also sufficient for (\ref{eqprodhl}) to hold. It is reasonable to think that this may indeed be true since the endpoint case was proved in \cite{LOPTT}. That is, for weights $w_1,\dots,w_m \in A_1$, we have that 
\begin{equation}\label{eqendpoint}
M^{\otimes}:L^{1}(w_1)\times \dots \times L^{1}(w_m) \longrightarrow L^{1/m,\infty}(w_1^{1/m} \dots w_m^{1/m}).
\end{equation}
To prove this result, one has to control the following quantity for $t>0$, which is related to the level sets (\ref{LevelSet}):
$$
\nu_{\vec w}\left(\left \{x\in \mathbb R^n : M^\otimes(\vec f)(x)>t \right \}\right)=\nu_{\vec w}\left(\left \{x\in \mathbb R^n : Mf_i(x)>\frac{t}{\prod_{j\neq i} Mf_j(x)} \right \}\right),
$$
where $\nu_{\vec{w}}=w_1^{1/m} \dots w_m^{1/m}$. This is achieved by applying the classical Sawyer-type inequality (\ref{eqclassic}) for the Hardy-Littlewood maximal operator $M$ in combination with the observation that for locally integrable functions $h_1,\dots,h_k$, $\prod_{j=1}^k (Mh_j)^{-1}\in RH_\infty$, with constant depending only on $k$ and the dimension $n$.

As we will show in Theorem~\ref{prodhl}, it turns out that the bound (\ref{eqprodhl}) holds if $w_i \in A_{p_i}^{\mathcal R}$, for $i=1,\dots,m$, solving in the affirmative the open question in \cite{CR} and completing the characterization of the restricted weak type bounds of $M^\otimes$ for $A_\infty$ weights. The strategy that we follow is similar to the one in \cite{LOPTT} for the endpoint case (\ref{eqendpoint}), but we have to replace the classical Sawyer-type inequality (\ref{eqclassic}) by the estimate obtained in Theorem~\ref{sawyer}, which is a new restricted weak Sawyer-type inequality involving the class of weights $A_p^{\mathcal R}$. That is, 
\begin{equation}\label{eqsawyer}
    \left \Vert \frac{Mf}{v}\right \Vert_{L^{p,\infty}(uv^p)} \leq C_{u,v} \Vert f \Vert_{L^{p,1}(u)},
\end{equation}
for $p>1$, $u\in A_p^{\mathcal R}$, and $uv^p\in A_\infty$. The $A_p^{\mathcal R}$ condition on the weight $u$ is a natural assumption since it is necessary when $v \approx 1$. In Lemma~\ref{creciente} we also manage to track the dependence of the constant $C_{u,v}$ on the weights $u$ and $uv^p$, even in the endpoint case $p=1$, refining the bound (\ref{eqclassic}) in \cite{CUMP}.

There is no reason to restrict ourselves to the study of one-variable Sawyer-type inequalities. Quite recently, the bound (\ref{eqclassic}) has been extended to the multi-variable setting in \cite{kob}. More precisely, for weights $w_1,\dots,w_m\in A_1$, and $v\in A_\infty$,
\begin{equation}\label{eqmultiend}
    \left \Vert \frac{\mathcal M (\vec f)}{v} \right \Vert_{L^{1/m,\infty}(\nu_{\vec w}v^{1/m})} \leq \left \Vert \frac{\prod_{i=1}^m Mf_i}{v} \right \Vert_{L^{1/m,\infty}(\nu_{\vec w}v^{1/m})} \lesssim \prod_{i=1}^m \Vert f_i \Vert_{L^1(w_i)}.
\end{equation}
Inspired by this result, we follow a similar approach to extend our Sawyer-type inequality (\ref{eqsawyer}) to the multi-variable setting, obtaining a generalization of (\ref{eqmultiend}) in Theorem~\ref{msawyer}. That is, for weights $w_1,\dots,w_m$ and $v$ such that for $i=1,\dots,m$, $w_i\in A_{p_i}^{\mathcal R}$ and $w_iv^{p_i} \in A_\infty$, 
\begin{equation}\label{eqmulti}
    \left \Vert \frac{\mathcal M (\vec f)}{v} \right \Vert_{L^{p,\infty}(\nu_{\vec w}v^{p})} \leq \left \Vert \frac{\prod_{i=1}^m Mf_i}{v} \right \Vert_{L^{p,\infty}(\nu_{\vec w}v^{p})} \lesssim \prod_{i=1}^m \Vert f_i \Vert_{L^{p_i,1}(w_i)}.
\end{equation}
Observe that this result is an extension of (\ref{eqprodhl}). To our knowledge, this multi-variable mixed restricted weak type inequalities for maximal operators involving the $A_p^{\mathcal{R}}$ condition on the weights have not been previously studied, and we found no record of them being conjectured in the literature.

Motivated by the conjecture of E. Sawyer in \cite{Sa}, we can ask ourselves if it is possible to obtain bounds like (\ref{eqmulti}) for multi-linear Calder\'on-Zygmund operators $T$. Once again, the endpoint case $p_1=\dots=p_m=1$ has already been considered and extensively investigated in \cite{kob}. There, it was shown that for weights $w_1,\dots,w_m\in A_1$, and $\nu_{\vec w} v^{1/m} \in A_\infty$,
\begin{equation}\label{eqmultiendcz}
    \left \Vert \frac{T (\vec f)}{v} \right \Vert_{L^{1/m,\infty}(\nu_{\vec w}v^{1/m})} \lesssim \prod_{i=1}^m \Vert f_i \Vert_{L^1(w_i)},
\end{equation}
as a corollary of (\ref{eqmultiend}), combined with a result in \cite{ompe}, that allows replacing $\mathcal M$ by $T$ using an extrapolation type argument based on the $A_\infty$ extrapolation theorem obtained in \cite{CUMP3,GCMP}. We succeed in our goal and manage to get an extension of (\ref{eqmultiendcz}) to the general restricted weak setting. In Theorem~\ref{czo} we prove, among other things, that for weights $w_1,\dots,w_m$ and $v$ such that for $i=1,\dots,m$, $w_i\in A_{p_i}^{\mathcal R}$ and $w_iv^{p_i} \in A_\infty$, and some other technical hypotheses on the weights,
\begin{equation}\label{eqmulticz}
    \left \Vert \frac{T (\vec f)}{v} \right \Vert_{L^{p,\infty}(\nu_{\vec w}v^{p})} \lesssim \prod_{i=1}^m \Vert f_i \Vert_{L^{p_i,1}(w_i)}.
\end{equation}
To achieve this, we build upon (\ref{eqmulti}), but unlike in \cite{kob}, we manage to avoid the use of extrapolation arguments like the ones in \cite{ompe}. Instead, we present in Theorem~\ref{sparsemax} a novel technique that allows us to replace $\mathcal M$ by $T$ exploiting the fine structure of the Lorentz space $L^{p,\infty}$, the $A_p^{\mathcal R}$ condition, and the recent advances in sparse domination. 

One can even go further and consider inequalities like \eqref{eqmulticz} assuming multi-variable conditions on the weights involved, as it was done in \cite{kob} with the endpoint case $p_1=\dots=p_m = 1$ and weights in $A_{\vec 1}$. In Section~\ref{section5}, we discuss our findings on this matter. There, we introduce the class of weights that characterizes the restricted weak type bounds of $\mathcal M$, denoted by $A_{\vec P}^{\mathcal R}$, study some of its properties, deduce the corresponding restricted weak type bounds for sparse operators and multi-linear Calder\'on-Zygmund operators, and conjecture the main results on Sawyer-type inequalities with weights in $A_{\vec P}^{\mathcal R}$. It is worth mentioning that we couldn't find in the literature any trace of results like (\ref{eqmulticz}) involving $\mathcal M$ or multi-linear Calder\'on-Zygmund operators, $A_p^{\mathcal R}$ or $A_{\vec P}^{\mathcal R}$ weights, and mixed restricted weak type inequalities.

Curiously, we didn't find much about Sawyer-type inequalities for Lorentz spaces apart from the endpoint results studied in \cite{AM,CUMP,ekm,kop,kob,MOS,M,MW,ompe,Sa}, and some endpoint estimates for multi-variable fractional operators (see \cite{swyrbelen}), multi-linear pseudo-differential operators (see \cite{cxy}), and the Hardy averaging operator (see \cite{lomr,mro}). As we have seen before, these inequalities are fundamental to understand the behavior of the operator $M^\otimes$, but they appear naturally in the study of other classical operators, even in the one-variable case. Consider, for example, the case of the Hilbert transform $H$. Indeed, if $p>1$ and $w\in A_p^{\mathcal R}$, it is well-known that $H:L^{p,1}(w)\longrightarrow L^{p,\infty}(w)$. Hence, duality,  linearity, and self-adjointness of $H$ yield
$$
\left \Vert  \frac{H(fw)}{w}  \right \Vert_{L^{p',\infty}(w)} \leq C_w \left \Vert  f  \right \Vert_{L^{p',1}(w)}.
$$
This is an example of an estimate like (\ref{ModelEx}) involving the $A_p^{\mathcal R}$ condition on the weights and obtained almost without effort. The same inequality holds for the Hardy-Littlewood maximal operator $M$, but we cannot use the same argument, as shown in \cite{CS}. In Theorem~\ref{dualsawyer} we will generalize such a result for $M$, obtaining as a particular case, a new characterization of $A_p^{\mathcal R}$ and an alternative proof of the result in \cite{CS}. In \cite{hype,lernerdual}, one can find similar endpoint estimates for Calder\'on-Zygmund operators, with $p'=1$ and $w \in A_1$ (see also \cite{carr,opre,osra,stock,wwz}).

Sawyer-type inequalities also arise in the broadly studied topic involving commutators of linear operators $T$ with a $BMO$ function $b$, although we will not deal with them in this paper. The crucial initial observation is that we can write $[b,T]$ as a complex integral operator using Cauchy's integral theorem, obtaining that for $\varepsilon>0$, 
$$
[b,T]f
=\frac{1}{2\pi i}\int_{\{z\in\mathbb C: \, |z|=
\varepsilon\}} \frac{T_z(f)}{z^2}\,dz, 
$$
where 
$$T_z(f) := e^{zb} T\left(\frac{f}{e^{zb}}\right), \quad z \in \mathbb{C}. 
$$
This approach was introduced in the celebrated paper \cite{CRW} and was further developed  in \cite{AKMP}. In the context of Lorentz spaces, for $p>1$ and a weight $w$, and in virtue of Minkowski's inequality, we get that for any $\varepsilon>0$,
\begin{equation*}
    \left \Vert [b,T]f  \right \Vert_{L^{p,\infty}(w)} \leq \frac{1}{\varepsilon} \,\sup_{z\in \mathbb C: \, |z|=\varepsilon}  
\left \Vert T_z(f)  \right \Vert_{L^{p,\infty}(w)}.
\end{equation*}
Since $b\in BMO$, as a consequence of the John-Nirenberg inequality, there is a constant $s_0>0$ such
that for $|z|\leq s_0$, $v^{-1}:=|e^{zb}|=e^{\Re(z) b} \in A_p$, and hence, it is possible to deduce weighted inequalities for commutators from estimates of the form
\begin{equation*}
\left \Vert  \frac{T(fv)}{v}  \right \Vert_{L^{p,\infty}(w)} \lesssim \left \Vert  f  \right \Vert_{X},
\end{equation*}
for a norm or a quasi-norm $\Vert \cdot  \Vert_{X}$, and $v^{-1}\in A_p$. Further results for commutators involving Sawyer-type inequalities can be found in \cite{B,BCP} (see also \cite{BCP2,BCP3}). 

Recently, E. R. P. has shown in \cite{ropethesis} that Sawyer-type inequalities for Lorentz spaces play a fundamental role in the extension to the multi-variable setting of the restricted weak type Rubio de Francia's extrapolation presented in \cite{cgs,CS}. His approach suggests that Conjecture~\ref{conjmsawyer} will be crucial for proving multi-variable extrapolation theorems involving weights in $A_{\vec P}^{\mathcal R}$. 

\section{Preliminaries}

\subsection{Lorentz spaces and classical weights}

Let us recall the definition of the Lebesgue and Lorentz spaces (see \cite{BS}).  For $p>0$ and an arbitrary measure space $(X,\nu)$, $L^{p,1}(\nu)$ is the Lorentz space of $\nu$-measurable functions such that
$$
\Vert f \Vert_{L^{p,1}(\nu)}
:=p\int_0^\infty \lambda_f^\nu (y) ^{1/p}  dy  =\int_0^\infty  f_\nu^*(t) t^{1/p}  \frac{dt}{t} <\infty,
$$
$L^{p}(\nu)$ is the Lebesgue space of $\nu$-measurable functions such that
$$
\Vert f \Vert_{L^{p}(\nu)}
:=\left(\int_X |f|^p \nu \right)^{1/p}  <\infty \quad \text{(or } 
\nu-\esssup_X |f| < \infty \text{ if } p=\infty), 
$$
and $L^{p,\infty}(\nu)$ is the Lorentz space of $\nu$-measurable functions such that
$$
\Vert f \Vert_{L^{p,\infty}(\nu)}:= \sup_{y>0}y  \lambda_f^\nu(y)^{1/p}= \sup_{t>0}t^{1/p}f_\nu^*(t)<\infty, 
$$
where  $f_\nu^*$ is the decreasing rearrangement of $f$ with respect to $\nu$, defined by 
$$
f_\nu^*(t):=\inf\{y>0:\lambda_f^\nu(y)\leq t\}, \qquad \lambda_f^\nu(t):=\nu( \{x\in X : |f(x)|>t\}).
$$
If $p\geq 1$, then $L^{p,1}(\nu) \hookrightarrow L^{p}(\nu) \hookrightarrow L^{p,\infty}(\nu)$. Given a $\sigma$-finite measure space $(X,\nu)$, and parameters $0<r<p<\infty$, the quantity
$$
\vertiii{f}_{L^{p,\infty}(\nu)}:=\sup_{0<\nu(E)<\infty} \nu(E)^{\frac{1}{p}-\frac{1}{r}}\left(\int_E |f|^r d \nu \right)^{1/r}
$$
satisfies that
\begin{equation*}
    \|f\|_{L^{p,\infty}(\nu)} \leq \vertiii{f}_{L^{p,\infty}(\nu)} \leq \left(\frac{p}{p-r} \right)^{1/r} \|f\|_{L^{p,\infty}(\nu)}.
\end{equation*}
This result is classical (see \cite[Exercise 1.1.12]{grafclas}), and throughout this paper, we will refer to these inequalities as Kolmogorov's inequalities.

Given $f\in L^1_{loc}(\mathbb R^n)$, the Hardy-Littlewood maximal operator $M$ is defined by
$$
Mf(x):=\sup_{Q\ni x}\frac{1}{|Q|}\int_Q |f(y)|dy, \quad x\in \mathbb R ^n,
$$
where the supremum is taken over all cubes $Q\subseteq\mathbb R^n$ containing $x$. Muckenhoupt studied the boundedness of $M$ on Lebesgue spaces $L^p(w)$ (see \cite{mucken}). Given a positive and locally integrable function $w$, called weight, and $1<p<\infty$, 
$$
M:L^p(w)\longrightarrow L^p(w)
$$
if, and only if $w\in A_p$; that is, if 
$$
[ w]_{A_p}:= \sup_{Q} \left(\fint_Q w \right) \left( \fint_Q w  ^{1-p'} \right)^{p-1} <\infty,
$$
where we use the notation $\fint_Q w =\frac{1}{|Q|}\int_Q w(x)dx$. Moreover, if $1\le p<\infty$, 
$$
M:L^p(w)\longrightarrow L^{p, \infty}(w)
$$
if, and only if $w\in A_p$, where a weight $w\in A_1$ if 
$$
[w]_{A_1}:=\sup_Q \left( \fint_Q w\right) \Vert \chi_Q w^{-1}\Vert_{L^{\infty}(w)}= \sup_Q \left( \fint_Q w\right) (\essinf_{x\in Q} w(x))^{-1}<\infty.
$$
Buckley proved in \cite{buckley} that for $1\leq p <\infty$,
\begin{equation*}
    \Vert M\Vert_{L^{p}(w)\to L^{p,\infty}(w)} \lesssim
    [w]_{A_p}^{1/p},
\end{equation*}
and if $p>1$, then
\begin{equation*}
   \Vert M\Vert_{L^{p}(w)\to L^{p}(w)} \lesssim
   [w]_{A_p}^{\frac{1}{p-1}}.
\end{equation*}

The restricted weak type bounds of $M$ were studied in \cite{CHK,kt}. For $1\leq p < \infty$, 
$$
M:L^{p,1}(w) \longrightarrow L^{p, \infty}(w) 
$$
if, and only if $w\in A_p^{\mathcal R}$, where a weight $w$ is in $A_p^{\mathcal R}$ (also denoted by $A_{p,1}$) if 
\begin{equation*}
[w]_{A_p^{\mathcal R}} :=\sup_{Q} w(Q)^{1/p}\frac{\Vert \chi_Q w^{-1}\Vert_{L^{p',\infty}(w)}}{|Q|} <\infty,
\end{equation*}
or equivalently, if
\begin{equation*}
\Vert w \Vert _{A_p^{\mathcal R}} :=\sup_Q \sup_{E\subseteq Q} \frac{|E|}{|Q|}\left(\frac{w(Q)}{w(E)}\right)^{1/p} <\infty.
\end{equation*}
Given a measurable set $E$, we write $w(E)=\int_{E}w(x) dx$. If $w= 1$, we simply write $|E|$. We have that $[w]_{A_p^{\mathcal R}} \leq \Vert w \Vert_{A_p^{\mathcal R}} \leq p [w]_{A_p^{\mathcal R}}$. Moreover, 
\begin{equation*}
\Vert M\Vert_{L^{p,1}(w)\to L^{p,\infty}(w)} \approx [ w]_{A_p^{\mathcal R}}.
\end{equation*}
As usual, we write $A \lesssim B$ if there exists a positive constant $C>0$, independent of $A$ and $B$, such that $A\leq C B$. If the implicit constant $C$ depends on some parameter $\alpha$, we may write $\lesssim_\alpha$ at our discretion. If $A\lesssim B$ and $B\lesssim A$, then we write $A\approx B$.

We now give the definitions of some other classes of weights that will appear later. For more information about them, see \cite{CUMP,cun,DMRO,gcrf}. Define the class of weights
$$
A_{\infty}:=\bigcup_{p\geq 1} A_p = \bigcup_{p\geq 1} A_p ^{\mathcal R}.
$$
A weight $w\in A_{\infty}$ if, and only if
$$
[w]_{A_{\infty}}:= \sup_Q \frac{1}{w(Q)} \int_Q M(w\chi_Q) < \infty.
$$
This quantity is usually referred to as the Fujii-Wilson $A_\infty$ constant (see \cite{fujii}). More generally, given a weight $u$, and $p> 1$, we say that $w\in A_p(u)$ if
$$
[w]_{A_p(u)} := \sup_{Q} \left(\frac{1}{u(Q)}\int_Q wu \right) \left( \frac{1}{u(Q)}\int_Q w  ^{1-p'}u \right)^{p-1} <\infty, 
$$
and $w\in A_1(u)$ if 
$$
[w]_{A_1(u)}:=\sup_Q \left(\frac{1}{u(Q)}\int_Q wu \right) \Vert \chi_Q w^{-1}\Vert_{L^{\infty}(wu)} = \sup_Q \left(\frac{1}{u(Q)}\int_Q wu \right) (\essinf_{x\in Q} w(x))^{-1}<\infty,
$$
and as before, we define
$$
A_{\infty}(u):= \bigcup_{p\geq 1} A_p(u).
$$
If $u$ is a doubling weight for cubes in $\mathbb R^n$, and $w\in A_{\infty}(u)$, then
$$
[w]_{A_{\infty}(u)}:= \sup_Q \frac{1}{wu(Q)} \int_Q M_u(w\chi_Q) u < \infty,
$$
where
$$
M_u f(x):=\sup_{Q\ni x}\frac{1}{u(Q)}\int_Q |f(y)| u(y) dy
$$
is the weighted Hardy-Littlewood maximal operator. If $p>1$, then $M_u$ is bounded on $L^p(wu)$ if, and only if $w\in A_p(u)$, provided that $u$ is doubling. Given $s>1$, we say that a weight $w\in RH_s$ if
$$
[w]_{RH_s}:=\sup_Q \frac{|Q|}{w(Q)} \left( \fint_Q w^s\right)^{1/s}<\infty,
$$
and $w\in RH_{\infty}$ if
$$
[w]_{RH_{\infty}}:= \sup_Q \frac{|Q|}{w(Q)} \Vert \chi_Q w \Vert_{L^{\infty}(\mathbb R^n)}= \sup_Q \frac{|Q|}{w(Q)} \esssup_{x\in Q} w(x) <\infty.
$$
We have that
$$
A_\infty = \bigcup_{1<s\leq \infty } RH_s.
$$

In \cite{LOPTT}, the following multi-variable extension of the Hardy-Littlewood maximal operator was introduced in connection with the theory of multi-linear Calder\'on-Zygmund operators:
$$
\mathcal M (\vec f):= \sup_Q \prod_{i=1}^m \left( \fint_Q |f_i| \right) \chi_Q,
$$
for $\vec{f}=(f_1,\dots,f_m)$, with $f_i\in L^1_{loc}(\mathbb R ^n)$, $i=1,\dots,m$. Commonly, this operator is referred to as the \textit{curly} operator.
 For $1 \leq p_1,\dots, p_m <\infty $, $\vec P= (p_1,\dots,p_m)$, $\frac{1}{p}=\frac{1}{p_1}+ \dots + \frac{1}{p_m}$, and weights $w_1, \dots,w_m$, with $\vec w= (w_1,\dots,w_m)$, and $\nu_{\vec w}:=w_1^{p/p_1} \dots w_m ^{p/p_m}$, $$
\mathcal M : L^{p_1}(w_1) \times \dots \times L^{p_m}(w_m) \longrightarrow L^{p,\infty}(\nu_{\vec w})
$$
if, and only if $\vec w \in A_{\vec P}$; that is, if
$$
[\vec w]_{A_{\vec P}}:= \sup_Q \left( \fint_Q \nu_{\vec w}\right)^{1/p} \prod_{i=1}^m \left( \fint_Q w_i ^{1-p_i'}\right)^{1/p_i'} < \infty,
$$
where $\left( \fint_Q w_i ^{1-p_i'}\right)^{1/p_i'}$ is replaced by $(\essinf_{x\in Q} w_i (x))^{-1}$ if $p_i = 1$. Moreover, if $1 < p_1,\dots, p_m <\infty $, then 
$$
\mathcal M : L^{p_1}(w_1) \times \dots \times L^{p_m}(w_m) \longrightarrow L^{p}(\nu_{\vec w})
$$
if, and only if $\vec w \in A_{\vec P}$.

We are using throughout the paper the standard notation $
T : X_1  \times \dots \times X_m \longrightarrow X_0
$ to denote  that $T$ is a bounded operator from $ X_1  \times \dots \times X_m$ to  $ X_0$, where $X_i$ is an appropriate function space.

\subsection{Dyadic grids and sparse collections of cubes}

A general dyadic grid $\mathscr D$ is a collection of cubes in $\mathbb R^n$ with the following properties:

\begin{itemize}
\item [($a$)] For any $Q \in \mathscr D$, its side length $l_Q$ is of the form $2^k$, for some $k\in \mathbb Z$.
\item [($b$)] For all $Q,R \in \mathscr D$, $Q\cap R \in \{\emptyset, Q, R\}$.
\item [($c$)] The cubes of a fixed side length $2^k$ form a partition of $\mathbb R^n$.
\end{itemize}
The standard dyadic grid in $\mathbb R^n$ consists of the cubes $2^{-k}([0,1)^n+j)$, with $k\in \mathbb Z$ and $j\in \mathbb Z^n$. It is well-known (see \cite{hype}) that if one considers the perturbed dyadic grids 
$$\mathscr D_ \alpha :=\{2^{-k}([0,1)^n+j+\alpha):k\in \mathbb Z,j\in \mathbb Z^n\},$$
with $\alpha \in \{0,\frac{1}{3}\}^n$, then for any cube $Q\subseteq \mathbb R^n$, there exist $\alpha$ and a cube $Q_\alpha \in \mathscr D_ \alpha$ such that $Q\subseteq Q_\alpha$ and $l_{Q_\alpha}\leq 6 l_Q$. 

A collection of cubes $\mathcal S$ is said to be $\eta$-sparse if there exists $0<\eta <1$ such that for every cube $Q \in \mathcal S$, there exists a set $E_Q \subseteq Q$ with $\eta |Q| \leq |E_Q|$, and for every $Q\neq R \in \mathcal S$, $E_R \cap E_Q = \emptyset$.  

For more information about these topics, see \cite{lena}.

\subsection{Calder\'on-Zygmund operators}

We say that a function $\omega:[0,\infty) \rightarrow [0,\infty)$ is a modulus of continuity if it is continuous, increasing, sub-additive and such that $\omega(0)=0$. We say that $\omega$ satisfies the Dini condition if
$$
\Vert \omega \Vert_{\text{Dini}}:=\int_0 ^1 \frac{\omega (t)}{t}dt < \infty.
$$

We give the definition of the multi-linear $\omega$-Calder\'on-Zygmund operators. We denote by $\mathscr S(\mathbb R^n)$ the space of all Schwartz functions on $\mathbb R^n$ and by $\mathscr S'(\mathbb R^n)$ its dual space, the set of all tempered distributions on $\mathbb R^n$. 

\begin{definition}
An $m$-linear $\omega$-Calder\'on-Zygmund operator is an $m$-linear and continuous operator $T:\mathscr S (\mathbb R^n)\times \dots \times \mathscr S (\mathbb R^n)\longrightarrow \mathscr S'(\mathbb R^n)$ for which there exists a locally integrable function $K(y_0,y_1,\dots,y_m)$, defined away from the diagonal $y_0=y_1=\dots=y_m$ in $(\mathbb R^n)^{m+1}$, satisfying, for some constant $C_K>0$, the size estimate
$$
|K(y_0,y_1,\dots,y_m)| \leq \frac{C_K}{(|y_0-y_1|+\dots + |y_0-y_m|)^{nm}},
$$
for all $(y_0,y_1,\dots,y_m) \in (\mathbb R^n)^{m+1}$ with $y_0\neq y_j$ for some $j\in \{1,\dots,m\}$, and the smoothness estimate 
\begin{align*}
|K(y_0,y_1,\dots,y_i,\dots,y_m)&-K(y_0,y_1,\dots,y_i ',\dots, y_m)| \\ & \leq \frac{C_K}{(|y_0-y_1|+\dots + |y_0-y_m|)^{nm}}\omega \left( \frac{|y_i-y_i'|}{(|y_0-y_1|+\dots + |y_0-y_m|)^{nm}}\right),
\end{align*}
for $i=0,\dots,m$ and whenever $|y_i-y_i'|\leq \frac{1}{2}\max_{0\leq j \leq m} |y_i-y_j|$, and such that
$$
T(f_1,\dots,f_m)(x)=\int_{\mathbb R^n} \dots \int_{\mathbb R^n} K(x,y_1,\dots,y_m)f_1(y_1)\dots f_m(y_m)dy_1 \dots dy_m,
$$
whenever $f_1,\dots,f_m \in \mathscr C^\infty _c (\mathbb R^n)$ and $x\not \in \bigcap _{j=1}^m \text{supp } f_j$, and for some $1\leq q_1,\dots,q_m <\infty$, $T$ extends to a bounded $m$-linear operator from $L^{q_1}(\mathbb R^n)\times \dots \times L^{q_m}(\mathbb R^n)$ to $L^q(\mathbb R^n)$, with $\frac{1}{q}=\frac{1}{q_1} + \dots + \frac{1}{q_m}$.
\end{definition}

If we take $\omega(t)=t^{\varepsilon}$ for some $\varepsilon >0$, we recover the classical multi-linear Calder\'on-Zygmund operators. In general, an $m$-linear $\omega$-Calder\'on-Zygmund operator with $\omega$ satisfying the Dini condition can be extended to a bounded operator from $L^1(\mathbb R^n) \times \dots \times L^1(\mathbb R^n)$ to $L^{1/m,\infty}(\mathbb R^n)$. The multi-linear Calder\'on-Zygmund theory has been investigated by many authors. For more information on this matter, see \cite{graftor,LOPTT, lu} and the publications cited there. 

\section{Sawyer-type inequalities for maximal operators}

We devote this section to the study of a novel restricted weak type inequality that extends the classical Sawyer-type inequality (\ref{eqclassic}) for the Hardy-Littlewood maximal operator. To this end, we will need some previous results.

The following lemma contains well-known results on weights (see \cite{CUMP,cun,gcrf,kob}), but we will give most of their proofs since we need to keep track of the constants of the weights involved. 

\begin{lemma}\label{pesos}
Let $u$ and $w$ be weights.
\begin{enumerate}
\item[$(a)$] If $u\in A_1$, then $u^{-1}\in RH_{\infty}$, and $[u^{-1}]_{RH_{\infty}}\leq [u]_{A_1}$.
\item[$(b)$] If $u \in RH_{\infty}$, and $q>0$, then $u^{q}\in RH_{\infty}$. If $q\geq 1$, then $[u^{q}]_{RH_{\infty}}\leq [u]_{RH_\infty}^q$.
\item[$(c)$] If $u\in RH_{\infty}$, and $[u]_{RH_{\infty}} \leq \beta$, then there exists $r>1$, depending only on $n,\beta$, such that $u\in A_r$ and $[u]_{A_r}\leq c_{n,\beta}$. In particular, $RH_{\infty} \subseteq A_{\infty}$.
\item[$(d)$] If $u\in A_{\infty}$, and $w\in RH_{\infty}$, then $uw\in A_\infty$. 
\item[$(e)$] If $u\in A_{1} \cap RH_{\infty}$, then $u\approx 1$. 
\end{enumerate}

Fix $p\geq 1$, and $f_1,\dots,f_m\in L^1_{loc}(\mathbb R^n)$, and let $v=\prod_{i=1}^m (Mf_i)^{-1}$. 
\begin{enumerate}
\item[$(f)$] $v^p \in RH_{\infty}$, and $1\leq[v^p]_{RH_{\infty}} \leq c_{m,n,p}$.

\item[$(g)$] If $u\in A_{\infty}$, then $uv^p \in A_{\infty}$, with constant independent of $\vec f=(f_1,\dots,f_m)$.
\end{enumerate}

\end{lemma}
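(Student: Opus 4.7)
My plan is to prove each item by unpacking the explicit definitions of the weight classes, since the point of the lemma is to track constants. Parts (a)--(e) are about general weight manipulations, while (f)--(g) exploit the special structure of $(Mh)^{-1}$.

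For (a), the $A_1$ condition $\fint_Q u \leq [u]_{A_1}\essinf_Q u$ rewrites as $\esssup_Q u^{-1} \leq [u]_{A_1}|Q|/u(Q)$; pairing this with the Cauchy--Schwarz bound $|Q|^2 \leq u(Q)\,u^{-1}(Q)$ yields $\frac{|Q|}{u^{-1}(Q)}\esssup_Q u^{-1} \leq [u]_{A_1}$, which is exactly $[u^{-1}]_{RH_\infty} \leq [u]_{A_1}$. For (b) with $q \geq 1$, Jensen gives $(\fint_Q u)^q \leq \fint_Q u^q$, so $\esssup_Q u^q = (\esssup_Q u)^q \leq [u]_{RH_\infty}^q(\fint_Q u)^q \leq [u]_{RH_\infty}^q \fint_Q u^q$; for $0<q<1$, the qualitative statement $u^q \in RH_\infty$ follows from the reverse Hölder inequality available to $u \in A_\infty$. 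For (c), the $RH_\infty$ condition $\esssup_Q u \leq \beta \fint_Q u$ immediately gives, for $E\subseteq Q$, $u(E) \leq \beta(|E|/|Q|)u(Q)$; this strong $A_\infty$ size condition yields $u \in A_r$ for some $r=r(n,\beta)$ by the classical sharp reverse Hölder argument, so in particular $RH_\infty \subseteq A_\infty$.

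For (d), I would use the log characterization of $A_\infty$: $h \in A_\infty$ iff $\fint_Q h \leq C_h \exp(\fint_Q \log h)$ for all cubes $Q$. Since $w \in RH_\infty \subseteq A_\infty$ by (c),
$$
\fint_Q uw \leq (\esssup_Q w)\fint_Q u \leq [w]_{RH_\infty}(\fint_Q w)\, C_u \exp(\fint_Q \log u) \leq [w]_{RH_\infty} C_u C_w \exp(\fint_Q \log uw),
$$
so $uw \in A_\infty$. Part (e) is short: combining $\essinf_Q u \geq \fint_Q u/[u]_{A_1}$ with $\esssup_Q u \leq [u]_{RH_\infty}\fint_Q u$ yields the uniform oscillation bound $\esssup_Q u/\essinf_Q u \leq [u]_{A_1}[u]_{RH_\infty}$; letting $Q$ grow to $\mathbb R^n$ forces $u$ to be essentially constant.

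For (f), the key input is that $(Mh)^{-1} \in RH_\infty$ with dimensional constant, which follows from the Coifman--Rochberg theorem $(Mh)^\delta \in A_1$ (for $0<\delta<1$, with explicit constant) via (a) and (b). The product $v = \prod_{i=1}^m (Mf_i)^{-1}$ is a product of $m$ such $RH_\infty$ weights and hence in $RH_\infty$ with constant $c_{m,n}$; raising to the $p$-th power and invoking (b) (together with an $A_\infty$ reverse Hölder tweak if $p<1$) yields $v^p \in RH_\infty$ with constant $c_{m,n,p}$, the lower bound $[v^p]_{RH_\infty} \geq 1$ being automatic from $\esssup \geq \fint$. Finally, (g) is immediate from (f) and (d): $uv^p \in A_\infty$ with constant depending on $[u]_{A_\infty}$, $m$, $n$, $p$ but independent of $\vec f$. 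The main technical delicacy I anticipate lies in (f), where controlling the product of $RH_\infty$ weights requires more than a naive $\esssup$ estimate and where the case $0<p<1$ in (b) and (f) must be handled via $A_\infty$ reverse Hölder rather than pointwise Jensen.
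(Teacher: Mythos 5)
Most of your proposal matches the paper or gives correct alternatives. Parts (a)--(c) are essentially the paper's computations (the paper handles $0<q<1$ in (b) by citing Cruz-Uribe--Neugebauer, and gets (c) from the derived $\varepsilon$--$\delta$ condition via Muckenhoupt's classical theorem, so your appeal to classical theory there is on the same footing, though ``sharp reverse H\"older argument'' is not really the tool that converts the size condition into $A_r$). For (d) you argue via the exponential characterization $\fint_Q h \leq C_h\exp\bigl(\fint_Q\log h\bigr)$ instead of the paper's explicit H\"older computation showing $uw\in A_{q+r-1}$ with $[uw]_{A_{q+r-1}}\le [w]_{RH_\infty}[u]_{A_q}[w]_{A_r}$; your route is valid for the statement, but the paper's explicit $A_s$ constant is what gets reused quantitatively later (Lemma~\ref{creciente}, Theorems~\ref{prodhl} and~\ref{msawyer}). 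For (e) your direct argument (uniform oscillation bound on all cubes, then exhaust $\mathbb R^n$) is correct and replaces the paper's citation.

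The genuine gap is in (f). You reduce it to the claim that a product of $m$ weights in $RH_\infty$ is again in $RH_\infty$ with constant $c_{m,n}$, and you explicitly leave this as ``the main technical delicacy'' without an argument --- but that claim \emph{is} the whole content of (f) on your route. It does not follow from a sup estimate: $\esssup_Q(w_1w_2)\le(\esssup_Q w_1)(\esssup_Q w_2)\le C_1C_2(\fint_Q w_1)(\fint_Q w_2)$ leaves you needing $(\fint_Q w_1)(\fint_Q w_2)\lesssim \fint_Q w_1w_2$, which fails for general weights (anti-correlation) and for $RH_\infty$ factors requires a genuine argument using that each factor is in $A_\infty$: e.g.\ the set $E=\{w_1\ge \esssup_Q w_1/(2C_1)\}$ has $|E|\ge |Q|/(2C_1)$, and the $A_p$-type property of $w_2$ gives $w_2(E)\gtrsim w_2(Q)$, whence $\fint_Q w_1w_2\gtrsim (\esssup_Q w_1)(\fint_Q w_2)$. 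The paper avoids the issue entirely: by Coifman--Rochberg $(Mf_i)^{\delta}\in A_1$ with $[(Mf_i)^{\delta}]_{A_1}\le c_n/(1-\delta)$, H\"older shows the geometric mean $w:=\prod_{i=1}^m(Mf_i)^{\delta/m}\in A_1$ with the same bound, and then $v^p=w^{-mp/\delta}$ is a single negative power of one $A_1$ weight, so (a) and (b) give $[v^p]_{RH_\infty}\le [w]_{A_1}^{mp/\delta}\le (c_n/(1-\delta))^{mp/\delta}$ --- no product-closure statement for $RH_\infty$ is ever needed. Either supply the closure argument sketched above (with constants tracked) or switch to the paper's maneuver; also note that your worry about $p<1$ in (b)/(f) is moot, since the lemma fixes $p\ge 1$.
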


\begin{proof}
To prove (a), fix a cube $Q\subseteq \mathbb R^n$. By H\"older's inequality, we have that
$$
|Q|=\int_Q u^{-1/2}u^{1/2} \leq \left( \int_Q u^{-1}\right)^{1/2} \left( \int_Q u \right)^{1/2},
$$
and hence,
$$
\esssup_{x \in Q} u(x)^{-1} =(\essinf_{x\in Q}u(x))^{-1} \leq [u]_{A_1} \frac{|Q|}{u(Q)}\leq [u]_{A_1} \fint_Q u^{-1},
$$
and the desired result follows taking the supremum over all cubes $Q$.

The property (b) follows from \cite[Theorem 4.2]{cun}. Let $q\geq 1$, and fix a cube $Q\subseteq \mathbb R^n$. Then,
$$
\esssup_{x\in Q} u(x) \leq [u]_{RH_\infty} \fint_Q u \leq [u]_{RH_\infty} \left(\fint_Q u^q \right)^{1/q},
$$
from which the desired result follows, as before. 

To prove (c), fix a cube $Q\subseteq \mathbb R^n$, and a measurable set $E\subseteq Q$. Then,
$$
\frac{u(E)}{u(Q)}= \frac{1}{u(Q)}\int_Q \chi_E u \leq \frac{|E|}{u(Q)} \esssup_{x\in Q} u(x) \leq [u]_{RH_{\infty}} \frac{|E|}{|Q|} \leq \beta \frac{|E|}{|Q|}.
$$
In particular, for every $\varepsilon>0$, and $\delta:= \frac{\varepsilon}{\beta}$, if $|E|<\delta |Q|$, then $u(E)<\varepsilon u(Q)$, and the desired result follows from this fact applying the last theorem in \cite{muck}.

To prove (d), take $q,r>1$ such that $u\in A_q$ and $w\in A_r$. We will show that $uw\in A_s$, for $s:=q+r-1$. Fix a cube $Q\subseteq \mathbb R^n$. Then,
$$
\fint_Q uw \leq [w]_{RH_{\infty}} \left( \fint_Q u \right) \left( \fint_Q w\right),
$$
and in virtue of H\"older's inequality with exponent $\alpha:=1+\frac{r-1}{q-1}$,
\begin{align*}
\left(\fint _Q (uw)^{1-s'} \right)^{s-1} & \leq \left(\fint _Q u^{(1-s')\alpha} \right)^{(s-1)/\alpha} \left(\fint _Q w^{(1-s')\alpha '} \right)^{(s-1)/\alpha '} \\ & = \left(\fint _Q u^{1-q'} \right)^{q-1} \left(\fint _Q w^{1-r'} \right)^{r-1},
\end{align*}
so $[uw]_{A_s} \leq [w]_{RH_{\infty}}[u]_{A_q}[w]_{A_r}<\infty$.

The property (e) follows immediately from Corollary 4.6 in \cite{cun}.

To prove (f), observe that in virtue of \cite[Theorem 7.2.7]{grafclas}, we have that for $0< \delta <1$, $(Mf_i)^{\delta}\in A_1$, and $[(Mf_i)^{\delta}]_{A_1}\leq \frac{c_n}{1-\delta}$, $i=1,\dots,m$. In particular, $w:=\prod_{i=1}^m (Mf_i)^{\delta/m}\in A_1$, and $[w]_{A_1}\leq \prod_{i=1}^m [(Mf_i)^{\delta}]_{A_1}^{1/m}\leq \frac{c_n}{1-\delta}$. Since $v^p=w^{-mp/\delta}$, it follows from (a) and (b) that
$$
[v^p]_{RH_\infty} \leq [w^{-1}]^{mp/\delta}_{RH_\infty}  \leq [w]^{mp/\delta}_{A_1} \leq \left(\frac{c_n}{1-\delta}\right)^{mp/\delta},
$$
so
$$
1\leq [v^p]_{RH_\infty} \leq c_{m,n,p}:=\inf_{0< \delta < 1} \left(\frac{c_n}{1-\delta}\right)^{mp/\delta}.
$$

To prove (g), we already know by (f) that $v^p\in RH_{\infty}$, with constant bounded by $c_{m,n,p}$, so by (c), there exists $r>1$, depending only on $m,n,p$, such that $[v^p]_{A_r}\leq C_{m,n,p}$. By (d), for $q>1$ such that $u\in A_q$, and $s=s(m,n,p,q)=q+r-1$, $[uv^p]_{A_s}\leq \widetilde{C}_{m,n,p}[u]_{A_q}<\infty$.  
\end{proof}

The next lemma gives a result on weights that will be handy later on.

\begin{lemma}\label{weights}
Let $u$ and $v$ be weights, and suppose that $u\in A_{\infty}$. Then, $uv\in A_{\infty}$ if, and only if $v\in A_{\infty}(u)$.
\end{lemma}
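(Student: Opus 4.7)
Plan: I will prove the equivalence using the reverse H\"older characterization of $A_\infty$ and of $A_\infty(u)$. Since $u \in A_\infty$ is doubling, $u\,dx$ makes $\mathbb R^n$ a space of homogeneous type, so the $A_p(u)$-theory enjoys all the standard equivalences; in particular $v \in A_\infty(u)$ is equivalent to the existence of $r > 1$ and $C > 0$ such that
\begin{equation*}
\left(\frac{1}{u(Q)} \int_Q v^{r} u\right)^{1/r} \leq C \, \frac{1}{u(Q)} \int_Q v u \qquad \text{for every cube } Q,
\end{equation*}
and analogously $uv \in A_\infty$ is equivalent to a reverse H\"older inequality for $uv$ with respect to Lebesgue measure at some exponent $t > 1$.

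For the direction $v \in A_\infty(u) \Rightarrow uv \in A_\infty$, the quickest route is the $\varepsilon$--$\delta$ characterization. Given $\varepsilon > 0$, I would first use $v \in A_\infty(u)$ to produce $\delta_1 > 0$ such that $u(E) < \delta_1 u(Q)$ implies $uv(E) < \varepsilon\, uv(Q)$ for all $E \subseteq Q$, then use $u \in A_\infty$ to produce $\delta_2 > 0$ with $|E| < \delta_2 |Q| \Rightarrow u(E) < \delta_1 u(Q)$. Chaining the two gives the $A_\infty$ condition for $uv$.

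The converse $uv \in A_\infty \Rightarrow v \in A_\infty(u)$ is the delicate point, because small $u$-measure need not force small Lebesgue measure. I would instead work with reverse H\"older directly. Pick $t > 1$ with $uv$ satisfying reverse H\"older at exponent $t$. Since $u \in A_\infty$, there is $q \geq 1$ with $u \in A_q$, hence $u^{1-q'} \in A_{q'} \subseteq A_\infty$ satisfies reverse H\"older at some $s_0 > 1$; combining with the $A_q$ condition and Jensen's inequality for smaller exponents one obtains
\begin{equation*}
\int_Q u^{-\gamma} \lesssim \frac{|Q|^{\gamma+1}}{u(Q)^{\gamma}} \qquad \text{for every cube } Q \text{ and every } 0 < \gamma \leq \Gamma,
\end{equation*}
where $\Gamma := s_0/(q-1)$. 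Set $r := t(\Gamma+1)/(t+\Gamma) \in (1, t)$ and write $\int_Q v^r u = \int_Q (uv)^r\, u^{1-r}$. H\"older's inequality with exponents $t/r$ and $t/(t-r)$ controls the first factor by the reverse H\"older for $uv$ and the second by the negative-power bound above, with $\gamma = (r-1)t/(t-r) = \Gamma$ forced by the choice of $r$. The exponents of $|Q|$ cancel identically, and those of $u(Q)$ combine to $-(r-1)$, yielding
\begin{equation*}
\int_Q v^r u \lesssim \frac{(uv(Q))^r}{u(Q)^{r-1}},
\end{equation*}
which after division by $u(Q)$ is precisely reverse H\"older at exponent $r$ for $v$ with respect to $u$, giving $v \in A_\infty(u)$.

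The main obstacle is the exponent bookkeeping in the reverse direction: one needs simultaneously $1 < r < t$, $\gamma \leq \Gamma$, and the two exponent-cancellations (for $|Q|$ and for $u(Q)$) to reproduce exactly the target inequality. The symmetric choice $r = t(\Gamma+1)/(t+\Gamma)$ is precisely what brings all of these constraints into a single compatible configuration; any less careful matching would leave a spurious power of $|Q|$ or $u(Q)$ and ruin the conclusion.
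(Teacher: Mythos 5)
Your proposal is correct, but on the delicate direction it takes a genuinely different route from the paper. For $uv\in A_\infty \Rightarrow v\in A_\infty(u)$ you derive a reverse H\"older inequality for $v$ with respect to $u\,dx$ (your bookkeeping with $r=t(\Gamma+1)/(t+\Gamma)$ does check out: the powers of $|Q|$ cancel and the power of $u(Q)$ is exactly $-(r-1)$), and then invoke the equivalence of reverse H\"older with $\bigcup_{p}A_p(u)$ on the space of homogeneous type $(\mathbb R^n,u\,dx)$. The paper instead verifies the $A_q(u)$ condition directly, with $q=rs'$ where $u\in RH_s$ and $uv\in A_r$, by one application of H\"older's inequality; this buys the explicit bound $[v]_{A_q(u)}\le [u]_{RH_s}^q[uv]_{A_r}$ with an explicit $q$, which is what gets used later in Remark~\ref{constant} and Lemma~\ref{creciente}, whereas your route is qualitative unless you also quantify the passage from reverse H\"older to $A_p(u)$. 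For $v\in A_\infty(u)\Rightarrow uv\in A_\infty$ you chain $\varepsilon$--$\delta$ conditions exactly as the paper does (via \cite{DMRO}), but note that your middle link, ``$u(E)<\delta_1 u(Q)\Rightarrow uv(E)<\varepsilon\,uv(Q)$'', is not what the $A_p(u)$ definition gives by H\"older (that yields $u(E)/u(Q)\lesssim (uv(E)/uv(Q))^{1/p}$, the opposite orientation); it requires the reverse H\"older self-improvement of $A_\infty(u)$ weights for the doubling measure $u\,dx$, so you again lean on the heavier space-of-homogeneous-type machinery, while the paper's orientation needs only H\"older plus Muckenhoupt's classical equivalence. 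Also take $q>1$ when choosing $u\in A_q$ so that $u^{1-q'}$ makes sense. These background facts are standard for doubling $u\,dx$, so your argument is sound, just less self-contained and less quantitative than the paper's.
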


\begin{proof}
Let us first assume that $uv\in A_{\infty}$. Since $u\in A_{\infty}$, there exists $s>1$ such that $u\in RH_s$, and since $uv\in A_{\infty}$, there exists $r>1$ such that $uv\in A_r$. Take $q:=\frac{rs}{s-1}>1$. We will show that $v\in A_q(u)$. Fix a cube $Q$. Then,
\begin{align*}
I_Q:=\left(\frac{1}{u(Q)} \int _Q v u \right) \left(\frac{1}{u(Q)} \int _Q v^{1-q'} u \right)^{q-1} = \left( \frac{|Q|}{u(Q)} \right)^q \left(\frac{1}{|Q|} \int _Q v u \right) \left(\frac{1}{|Q|} \int _Q (vu)^{1-q'} u^{q'} \right)^{q-1}.
\end{align*}
Take $\alpha:=\frac{q-1}{r-1}=1+\frac{r}{(r-1)(s-1)}>1$ and observe that $(1-q')\alpha= 1-r'$, $\frac{q-1}{\alpha}=r-1$, $q' \alpha ' = s$, and $\frac{q-1}{\alpha'}=\frac{q}{s}$. Using H\"older's inequality with exponent $\alpha$, we get that
\begin{align*}
\left(\frac{1}{|Q|} \int _Q (vu)^{1-q'} u^{q'} \right)^{q-1} & \leq \left(\frac{1}{|Q|} \int _Q (vu)^{(1-q')\alpha} \right)^{(q-1)/\alpha} \left(\frac{1}{|Q|} \int _Q u^{q'\alpha '} \right)^{(q-1)/\alpha '} \\ & = \left(\frac{1}{|Q|} \int _Q (vu)^{1-r'} \right)^{r-1} \left(\frac{1}{|Q|} \int _Q u^{s} \right)^{q/s} \\ & \leq [u]_{RH_s}^q \left(\frac{1}{|Q|} \int _Q (vu)^{1-r'} \right)^{r-1} \left(\frac{u(Q)}{|Q|} \right)^{q}.
\end{align*}
Hence,
$$
I_Q\leq [u]_{RH_s}^q \left(\frac{1}{|Q|} \int _Q v u \right) \left(\frac{1}{|Q|} \int _Q (vu)^{1-r'} \right)^{r-1} \leq [u]_{RH_s}^q [uv]_{A_r},
$$
and $[v]_{A_q(u)}=\sup_Q I_Q \leq [u]_{RH_s}^q [uv]_{A_r} < \infty$.

For the converse, let us assume that $v\in A_{\infty}(u)$. It follows from Theorem 3.1 in \cite{DMRO} that there exist $\delta, C>0$ such that for every cube $Q\subseteq \mathbb R^n$ and every measurable set $E\subseteq Q$,
$$
\frac{u(E)}{u(Q)} \leq C \left(\frac{uv(E)}{uv(Q)} \right)^\delta.
$$
Similarly, since $u\in A_\infty$, there exist $\varepsilon,c >0$ such that for every cube $Q\subseteq \mathbb R^n$ and every measurable set $E\subseteq Q$,
$$
\frac{|E|}{|Q|} \leq c \left(\frac{u(E)}{u(Q)} \right)^\varepsilon,
$$
so for every cube $Q\subseteq \mathbb R^n$ and every measurable set $E\subseteq Q$,
$$
\frac{|E|}{|Q|} \leq cC^\varepsilon \left(\frac{uv(E)}{uv(Q)} \right)^{\varepsilon \delta},
$$
and hence, $uv\in A_\infty$.
\end{proof}

\begin{remark}
This result is an extension of Lemma 2.1 in \cite{CUMP}, where it is shown that if $u\in A_1$ and $v\in A_{\infty}(u)$, then $uv\in A_{\infty}$. 
\end{remark}

We introduce a weighted version of the dyadic Hardy-Littlewood maximal operator.

\begin{definition}
Let $\mathscr D$ be a general dyadic grid in $\mathbb R^n$, and let $u$ be a weight. For a measurable function $f$, we consider the function 
$$
M_u^{\mathscr D} f(x):=\sup_{\mathscr D \ni Q \ni x}  \frac{1}{u(Q)} \int _Q |f(y)|u(y)dy, \quad x \in \mathbb R^n,
$$
where the supremum is taken over all cubes $Q\in \mathscr D$ that contain $x$. If $u = 1$, we simply write $M^{\mathscr D} f$.
\end{definition}

The following bound for the operator $M_u^{\mathscr D}$ is essential.

\begin{theorem}\label{dyadic}
Let $\mathscr D$ be a general dyadic grid in $\mathbb R^n$, and let $u$ and $v$ be weights. If $u\in A_\infty$ and $uv\in A_\infty$, then there exists a constant $C_{u,v}$, independent of $\mathscr D$, such that for every measurable function $f$,
$$
\left \Vert \frac{M_u ^{\mathscr D} (fv)}{v}\right \Vert_{L^{1,\infty}(uv)}
\leq C_{u,v}\, \int_{\mathbb R^n} |f(x)| u(x) v(x) dx.
$$
\end{theorem}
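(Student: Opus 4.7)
\noindent\emph{Proof Plan.} By Lemma~\ref{weights}, the hypotheses $u \in A_\infty$ and $uv \in A_\infty$ imply $v \in A_\infty(u)$, which in turn gives a reverse H\"older inequality for $v$ with respect to $u\,dx$: there exist $\sigma > 1$ and $C_0 > 0$ with
\begin{equation*}
\Bigl(\frac{1}{u(Q)} \int_Q v^\sigma u\Bigr)^{1/\sigma} \leq C_0 \bar{v}_Q, \qquad \bar{v}_Q := \frac{1}{u(Q)} \int_Q v\, u,
\end{equation*}
for every cube $Q$. Writing $g := fv$, it suffices to establish, for every $t>0$,
\begin{equation*}
uv(\Omega_t) \leq \frac{C_{u,v}}{t} \int |g|\,u, \qquad \Omega_t := \{x : M_u^{\mathscr D}(g)(x) > tv(x)\}.
\end{equation*}

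Fix $\lambda > 1$ (to be chosen later) and split $\Omega_t = G_t \cup B_t$, where $x \in G_t$ whenever $x$ admits a dyadic witness $Q \ni x$ (i.e.\ $a_Q := u(Q)^{-1} \int_Q |g|\, u > tv(x)$) such that, in addition, $\bar{v}_Q \leq \lambda v(x)$. For $x \in G_t$, factoring the defining inequality as
\begin{equation*}
tv(x) < a_Q = \bar{v}_Q \cdot \frac{1}{uv(Q)}\int_Q |f|\,uv \leq \lambda v(x)\, M_{uv}^{\mathscr D}(f)(x)
\end{equation*}
shows that $G_t \subseteq \{M_{uv}^{\mathscr D}(f) > t/\lambda\}$. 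Applying the standard weak-type $(1,1)$ bound of the dyadic weighted maximal operator $M_{uv}^{\mathscr D}$ on $L^1(uv)$ gives $uv(G_t) \lesssim \frac{\lambda}{t}\int |f|\,uv = \frac{\lambda}{t}\int |g|\,u$.

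The delicate piece is $B_t = \Omega_t \setminus G_t$, on which every dyadic witness $Q$ of $x$ satisfies $v(x) < \bar{v}_Q/\lambda$. I would assign to each $x \in B_t$ its largest dyadic witness $Q(x)$ and then pass to the subfamily $\mathcal S$ of cubes that are maximal under inclusion inside $\{Q(x)\}_{x \in B_t}$; these are pairwise disjoint and cover $B_t$. On a fixed $Q \in \mathcal S$, decompose $B_t \cap Q$ according to whether $Q$ itself is a witness for the given point: the piece on which $Q$ is a witness lies in $\{v < \bar{v}_Q/\lambda\}$, whose $uv$-measure is trivially at most $uv(Q)/\lambda$, while the remaining piece is contained in $\{v \geq a_Q/t\}$ (since $Q$ fails to be a witness there by maximality) and is handled by iterating the construction inside $Q$ on its maximal strict dyadic subcubes that serve as $Q(x)$ for some $x$. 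The reverse H\"older inequality together with Chebyshev applied to $v^\sigma$ controls $uv(\{v \geq a_Q/t\} \cap Q)$ at each stage, producing a geometric factor in $\lambda$ (tunable through $C_0$ and $\sigma$) that makes the iterated sum converge to $uv(B_t) \lesssim_{u,v} \frac{1}{t}\int |g|\,u$.

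Combining the $G_t$ and $B_t$ estimates and fixing $\lambda$ large enough (depending on $C_0$, $\sigma$, and the relevant $A_\infty$ constants) to secure the geometric decay in the $B_t$ recursion then yields the desired bound, with a constant $C_{u,v}$ that does not depend on the grid $\mathscr D$. The main obstacle I anticipate is precisely this recursion: the witness cubes depend on $v(x)$, so the collections chosen at different levels can be genuinely intertwined, and calibrating $\lambda$ against the reverse H\"older constants so that the pointwise comparison between $v$ and its $u$-average drives a uniform geometric contraction is where the $A_\infty(u)$ hypothesis on $v$ is used most essentially.
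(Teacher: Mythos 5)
Your reduction via Lemma~\ref{weights} to $v\in A_\infty(u)$ and your treatment of the good set $G_t$ are fine (the weak $(1,1)$ bound for $M^{\mathscr D}_{uv}$ on $L^1(uv)$ holds with constant $1$ and needs no weight hypotheses), and that first reduction is exactly what the paper does before invoking the proof of Theorem~1.4 in \cite{CUMP}. The gap is in the bad set $B_t$, and it is not a matter of bookkeeping: in your $B_t$ argument the quantity $\frac1t\int|g|\,u$ never enters. Your terminal bound on each stopping cube is $uv(A_Q)\le uv(Q)/\lambda$, so the final estimate can only be of the form $\frac{1}{\lambda}\sum_{Q}uv(Q)$ over the cubes of the tree; to convert $uv(Q)=\bar v_Q\,u(Q)$ into $\frac1t\int_Q|g|\,u=\frac{a_Q}{t}\,u(Q)$ one needs a lower bound $a_Q\gtrsim t\,\bar v_Q$, and this is precisely what is unavailable on bad cubes: a bad point $x$ only gives $a_Q>t\,v(x)$ with $v(x)<\bar v_Q/\lambda$, and $v(x)$ may be arbitrarily small compared with $\bar v_Q$. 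Nor can you compare $\sum_{Q\in\mathcal S}uv(Q)$ with $uv(\Omega_t)$ and absorb, because the stopping cubes are not contained in $\Omega_t$ (the threshold $t\,v(x)$ varies with $x$, which is the whole difficulty of Sawyer-type estimates). The claimed geometric decay also does not materialize: Chebyshev with the reverse H\"older exponent gives $uv(\{x\in Q: v\ge a_Q/t\})\le C_0^{\sigma}\,(t\,\bar v_Q/a_Q)^{\sigma-1}\,uv(Q)$, which is a contraction only when $a_Q\gtrsim t\,\bar v_Q$ --- again the missing ingredient --- and, in any case, the next-generation cubes merely contain a point of $\{v\ge a_Q/t\}$, they are not subsets of it, so controlling $uv(\{v\ge a_Q/t\}\cap Q)$ does not control the measure of the next generation. (A smaller issue: the ``largest dyadic witness'' need not exist when $u(\mathbb R^n)<\infty$; one must truncate the maximal operator and pass to the limit.)

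For comparison, the paper's proof is exactly your first step plus a citation: from $v\in A_\infty(u)$ one takes $q>1$ with $v\in A_{q'}(u)$, which yields the boundedness of $M_u$ on $L^q(uv^{1-q})$, and this, combined with an $A_r^{\mathcal R}$-type covering estimate for $uv$ (see Remark~\ref{constant}), is the machinery from the proof of Theorem~1.4 in \cite{CUMP} that replaces your missing step; it is this strong-type input, rather than a purely covering/stopping-time scheme, that lets the right-hand side $\int|f|\,uv$ appear in the estimate of the portion of the level set where $v$ lies well below its $u$-averages. If you want a self-contained argument, you should expect to need an ingredient of that strength; as written, your $B_t$ estimate cannot produce the desired bound.
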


\begin{proof}
In virtue of Lemma~\ref{weights}, $v\in A_{\infty}(u)$ and hence, this theorem follows from the proof of Theorem 1.4 in \cite{CUMP}.
\end{proof}

\begin{remark}\label{constant}
If we examine the proof of Theorem 1.4 in \cite{CUMP}, and we combine it with Appendix A in \cite{umpbook}, we can take
$$
C_{u,v}=2^q(2^n r [uv]_{A_r^{\mathcal R}})^{r(q-1)}\left \Vert M_u \right \Vert_{L^q(uv^{1-q})}^q,
$$
where $r,q > 1$ are such that $uv\in A_r^{\mathcal R}$ and $v\in A_{q'}(u)$.
\end{remark}

\begin{remark}
The bound of Theorem~\ref{dyadic} also holds for the weighted Hardy-Littlewood maximal operator $M_u$, with constant 
$$C:=2^n 6^{np}p^p[u]_{A_p^{\mathcal R}}^p C_{u,v},$$
where $p\geq 1$ is such that $u\in A_p^{\mathcal R}$.
\end{remark}

We can now state and prove the main result of this section.

\begin{theorem}\label{sawyer}
Fix $p\geq 1$, and let $u$ and $v$ be weights such that $u\in A_{p}^{\mathcal R}$ and $uv^p\in A_{\infty}$. Then, there exists a constant $C>0$ such that for every measurable function $f$, 
\begin{equation*}
    \left \Vert \frac{Mf}{v}\right \Vert_{L^{p,\infty}(uv^p)} \leq C \Vert f \Vert_{L^{p,1}(u)}.
\end{equation*}
\end{theorem}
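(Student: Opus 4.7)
My plan is to reduce the theorem to the weak-type $(1,1)$ Sawyer-type estimate for $M_u$ given by the remark following Theorem~\ref{dyadic}, by testing on characteristic functions and exploiting a pointwise comparison that is specific to the $A_p^{\mathcal R}$ class. The starting point is the observation that using the second formulation of $\|u\|_{A_p^{\mathcal R}}$ applied to $E\cap Q\subseteq Q$, one obtains, for every measurable set $E$, every cube $Q$, and every $x\in Q$,
$$\frac{|E\cap Q|}{|Q|}\leq \|u\|_{A_p^{\mathcal R}}\left(\frac{u(E\cap Q)}{u(Q)}\right)^{1/p}\leq \|u\|_{A_p^{\mathcal R}}\bigl(M_u\chi_E(x)\bigr)^{1/p}.$$
Taking the supremum over cubes $Q\ni x$ yields the key pointwise bound
$$M\chi_E(x)\leq \|u\|_{A_p^{\mathcal R}}\bigl(M_u\chi_E(x)\bigr)^{1/p}.$$

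Since $u\in A_\infty$ and $uv^p\in A_\infty$ by hypothesis, the remark following Theorem~\ref{dyadic}, applied with the weight $v$ there replaced by $v^p$, gives
$$\left\|\frac{M_u\chi_E}{v^p}\right\|_{L^{1,\infty}(uv^p)}\leq C\,u(E),$$
with $C=C(u,v,p,n)$. Raising the pointwise inequality above to the $p$-th power, dividing by $v^p$, and using the elementary identity $\|h^p\|_{L^{1,\infty}(\mu)}=\|h\|_{L^{p,\infty}(\mu)}^p$, I obtain
$$\left\|\frac{M\chi_E}{v}\right\|_{L^{p,\infty}(uv^p)}^p\leq \|u\|_{A_p^{\mathcal R}}^p\left\|\frac{M_u\chi_E}{v^p}\right\|_{L^{1,\infty}(uv^p)}\leq C'\,u(E)=\frac{C'}{p^p}\|\chi_E\|_{L^{p,1}(u)}^p.$$
This is the restricted weak type bound for the operator $Sf:=Mf/v$ tested on characteristic functions.

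To complete the proof, I invoke the standard passage from such restricted weak type information to the full $L^{p,1}(u)\to L^{p,\infty}(uv^p)$ boundedness of $S$: since $S$ is positively sublinear, the layer-cake decomposition $|f|\leq \sum_k 2^{k+1}\chi_{E_k}$ with $E_k:=\{2^k<|f|\leq 2^{k+1}\}$, the equivalence $\|f\|_{L^{p,1}(u)}\approx \sum_k 2^k u(E_k)^{1/p}$, and the Minkowski inequality available for the Kolmogorov triple-bar norm on $L^{p,\infty}(uv^p)$ (an actual norm equivalent to $\|\cdot\|_{L^{p,\infty}(uv^p)}$ for $p>1$) deliver the conclusion; for $p=1$ the statement is the classical Sawyer inequality, obtained directly from the $M_u$ version of Theorem~\ref{dyadic} together with the pointwise bound $Mg\leq [u]_{A_1}M_u g$ valid for $u\in A_1$. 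The main obstacle is the pointwise comparison of the first step: it is exactly the fact that $A_p^{\mathcal R}$ provides this $p$-th root control of $M\chi_E$ by $M_u\chi_E$, whereas the stronger $A_p$ condition would be needed for the corresponding pointwise bound $Mg\leq [u]_{A_p}^{1/p}(M_u |g|^p)^{1/p}$ for arbitrary $g$, that permits the use of the $L^{p,1}(u)$ scale on the right-hand side, matching the $L^{1,\infty}$-in-the-$p$-th-power scale of Theorem~\ref{dyadic}.
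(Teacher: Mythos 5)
Your proposal is correct and follows essentially the same route as the paper's proof: the pointwise bound $M\chi_E\leq \|u\|_{A_p^{\mathcal R}}(M_u\chi_E)^{1/p}$, the mixed weak-type estimate for $M_u$ with $v$ replaced by $v^p$ (Theorem~\ref{dyadic} and its remark), and the standard extension from characteristic functions to general $f$ using the normability of $L^{p,\infty}(uv^p)$ for $p>1$, with the case $p=1$ handled via $M\leq [u]_{A_1}M_u$. The only cosmetic difference is that the paper passes through the dyadic operators $M^{\mathscr D}$, $M^{\mathscr D}_u$ explicitly, whereas you invoke the $M_u$ version directly.
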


\begin{proof}
It is known (see \cite{hype,lerner}) that there exists a collection $\{\mathscr D _ \alpha \}_{\alpha}$ of $2^n$ general dyadic grids in $\mathbb R^n$ such that
$$
Mf \leq 6^n \sum_{\alpha = 1} ^{2^n} M^{\mathscr D _ \alpha} f.
$$
Hence,
$$
\left \Vert \frac{Mf}{v}\right \Vert_{L^{p,\infty}(uv^p)} \leq 12^n \sum_{\alpha = 1}^{2^n} \left \Vert \frac{M^{\mathscr D _ \alpha} f}{v}\right \Vert_{L^{p,\infty}(uv^p)},
$$
and it suffices to establish the result for the operator $M^{\mathscr D}$, with $\mathscr D$ a general dyadic grid in $\mathbb R^n$. 

We first discuss the case $p=1$, which was proved in \cite{CUMP}. We reproduce the proof here keeping track of the constants. By the definition of the $A_1$ condition, 
$$
\frac{1}{|Q|} \int_Q |f| \leq [u]_{A_1}\frac{1}{u(Q)} \int _Q |f|u,
$$
for every cube $Q\in \mathscr D$, so we get that $M^{\mathscr D} f \leq [u]_{A_1} M^{\mathscr D}_u f$. This estimate combined with Theorem~\ref{dyadic} gives that
$$
\left \Vert \frac{M^{\mathscr D } f}{v}\right \Vert_{L^{1,\infty}(uv)} \leq [u]_{A_1} \left \Vert \frac{M^{\mathscr D }_u (vf/v)}{v}\right \Vert_{L^{1,\infty}(uv)}\leq [u]_{A_1} C_{u,v} \int_{\mathbb R^n} |f|u,
$$
and hence, the desired result follows, with $C=24^n [u]_{A_1} C_{u,v}$.

Now, we discuss the case $p>1$. Let us take $f=\chi_E$, with $E$ a measurable set in $\mathbb R^n$, and fix a cube $Q\in \mathscr D$. As before, by the definition of the $A_p ^{\mathcal R}$ condition, 
$$
\frac{1}{|Q|} \int_Q f \leq \Vert u \Vert_{A_p ^{\mathcal R}} \left( \frac{u(E\cap Q)}{u(Q)}\right)^{1/p},
$$
so we get that $M^{\mathscr D} (\chi_E) \leq p[u]_{A_p^{\mathcal R}} (M^{\mathscr D}_u (\chi_E))^{1/p}$. In particular,
$$
\left \Vert \frac{M^{\mathscr D } (\chi_E)}{v}\right \Vert_{L^{p,\infty}(uv^p)} \leq p[u]_{A_p^{\mathcal R}} \left \Vert \frac{M^{\mathscr D }_u (\chi_E)}{v^p}\right \Vert_{L^{1,\infty}(uv^p)}^{1/p}. 
$$
We can now apply Theorem~\ref{dyadic} to conclude that
$$
\left \Vert \frac{M^{\mathscr D }_u (\chi_E)}{v^p}\right \Vert_{L^{1,\infty}(uv^p)}=\left \Vert \frac{M^{\mathscr D }_u (v^p\chi_E/v^p)}{v^p}\right \Vert_{L^{1,\infty}(uv^p)} \leq C_{u,v^p} u(E). 
$$

Combining all the previous estimates, we have that
$$
\left \Vert \frac{M(\chi_E)}{v}\right \Vert_{L^{p,\infty}(uv^p)} \leq 24^n [u]_{A_p ^{\mathcal R}} C_{u,v^p} ^{1/p} \Vert \chi_E \Vert _{L^{p,1}(u)}.
$$

Since $p>1$, $L^{p,\infty}(uv^p)$ is a Banach space, and by standard arguments (see \cite[Exercise 1.4.7]{grafclas}), we can extend the previous estimate to arbitrary measurable functions $f$, gaining a factor of $4p'$ in the constant. Hence, the desired result follows, with $C=4\cdot 24^n p' [u]_{A_p^{\mathcal R}} C_{u,v^p}^{1/p}$.
\end{proof}

\begin{remark}\label{openq}
For $p=1$ and $u\in A_1$, a more general version of Theorem~\ref{sawyer} was established in \cite{kop}, replacing the hypothesis that $uv\in A_\infty$ by the weaker assumption that $v\in A_ \infty$. It is unknown to us whether the hypothesis that $uv^p\in A_\infty$ can be replaced by $v\in A_\infty$ when $p>1$. 
\end{remark}

In virtue of Lemma \ref{pesos}, if $u\in A_\infty$ and $v \in RH_\infty$, then for every $p\geq 1$, $uv^p\in A_\infty$, and we have a whole class of non-trivial examples of weights that satisfy the hypotheses of Theorem~\ref{sawyer}.

Observe that the conclusion of Theorem~\ref{sawyer} is completely elementary if $p>1$ and $u\in A_p$, since
\begin{align*}
\left \Vert \frac{Mf}{v} \right \Vert_{L^{p,\infty}(uv^p)} & \leq \left \Vert \frac{Mf}{v} \right \Vert_{L^{p}(uv^p)}=\left \Vert Mf \right \Vert_{L^{p}(u)} \lesssim  [u]_{A_p}^{\frac{1}{p-1}} \Vert f \Vert_{L^p(u)} \lesssim  [u]_{A_p}^{\frac{1}{p-1}} \Vert f \Vert_{L^{p,1}(u)}.
\end{align*}
However, this argument doesn't work in the general case, because the inequality
$$
\left \Vert \frac{h}{v}\right \Vert_{L^{p,\infty}(uv^p)} \lesssim \left \Vert h\right \Vert_{L^{p,\infty}(u)}
$$
may fail for some measurable functions $h$ on $\mathbb R^n$, and arbitrary weights $u$ and $v$, as can be seen by choosing $h(x)= |x|^{-\frac{n}{p}}\chi_{\{y\in \mathbb R^n \, : \, |y|\ge 1\}}(x)$, $u=1$, and $v(x) = h(x)+\chi_{\{y\in \mathbb R^n \, : \, |y| < 1\}}(x)$, with $0<p<\infty$.

To provide applications of Theorem~\ref{sawyer} we need to give a more precise estimate of the constant $C$ that appears there in terms of the corresponding constants of the weights involved. We achieve this in the following lemma.

\begin{lemma}\label{creciente}
In Theorem~\ref{sawyer}, if $r\geq 1$ is such that $uv^p \in A_r^{\mathcal R}$, then one can take
$$
C=\mathscr E_{r,p}^n([u]_{A_p^{\mathcal R}},[uv^p]_{A_r^{\mathcal R}}),
$$
where $\mathscr E_{r,p}^n:[1,\infty)^2 \longrightarrow \mathbb (0,\infty)$ is a function that increases in each variable, and it depends only on $r$, $p$, and the dimension $n$.
\end{lemma}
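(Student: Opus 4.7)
The plan is to trace the constant $C$ produced by the proof of Theorem~\ref{sawyer} through its constituent pieces, and to check that at every step the dependence on $[u]_{A_p^{\mathcal R}}$ and $[uv^p]_{A_r^{\mathcal R}}$ can be bounded by an expression that is monotone increasing in both. From the proof of Theorem~\ref{sawyer}, we already have the explicit bound $C\le c_{n,p}\,[u]_{A_p^{\mathcal R}}\,C_{u,v^p}^{1/p}$ when $p>1$ (and the analogous one with $C_{u,v}$ when $p=1$), so the question reduces to controlling the constant coming from Theorem~\ref{dyadic}. By Remark~\ref{constant},
\[
C_{u,v^p}=2^q\bigl(2^n r\,[uv^p]_{A_r^{\mathcal R}}\bigr)^{r(q-1)}\|M_u\|_{L^q(uv^{p(1-q)})}^q,
\]
where $r$ is the exponent in the hypothesis and $q>1$ is any exponent satisfying $v^p\in A_{q'}(u)$. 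The explicit factors are already monotone in $[uv^p]_{A_r^{\mathcal R}}$ (once $q$ is fixed), so the remaining tasks are to pick $q$ and to estimate $\|M_u\|_{L^q(uv^{p(1-q)})}$ in terms of the two input constants.

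To choose $q$, I would mimic the proof of Lemma~\ref{weights}: pick a reverse H\"older exponent $s>1$ for $u$ (which exists because $u\in A_p^{\mathcal R}\subseteq A_\infty$) and an exponent $\rho>1$ with $uv^p\in A_\rho$ (which exists because $uv^p\in A_r^{\mathcal R}\subseteq A_\infty$), and set $q:=\rho s/(s-1)$. The same computation that proves Lemma~\ref{weights} yields $[v^p]_{A_{q'}(u)}\le [u]_{RH_s}^{q'}[uv^p]_{A_\rho}$. Standard quantitative self-improvement estimates ensure that the parameters $s$, $[u]_{RH_s}$, $\rho$, and $[uv^p]_{A_\rho}$ can all be chosen as increasing functions of $[u]_{A_p^{\mathcal R}}$, $[uv^p]_{A_r^{\mathcal R}}$, $r$, $p$ and $n$: the quantitative reverse H\"older inequality controls $s$ and $[u]_{RH_s}$ in terms of $[u]_{A_\infty}$ (which in turn is controlled by $[u]_{A_p^{\mathcal R}}$ via the usual self-improvement $A_p^{\mathcal R}\subseteq A_{p+\varepsilon}$), and the same mechanism upgrades $uv^p\in A_r^{\mathcal R}$ to $uv^p\in A_\rho$ with a quantitative constant.

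With $q'$ in hand, I would exploit the identity $(v^{p(1-q)})^{1-q'}=v^p$ and the duality property for $A_p(u)$ weights, which gives $[v^{p(1-q)}]_{A_q(u)}=[v^p]_{A_{q'}(u)}^{q-1}$. Since $u\in A_\infty$ is doubling with doubling constant controlled by $[u]_{A_p^{\mathcal R}}$, the weighted Buckley-type inequality for $M_u$ on $L^q(uv^{p(1-q)})$ gives
\[
\|M_u\|_{L^q(uv^{p(1-q)})}\lesssim [v^{p(1-q)}]_{A_q(u)}^{1/(q-1)}=[v^p]_{A_{q'}(u)},
\]
with an implicit constant depending only on $q$, $n$, and the doubling constant of $u$. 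Substituting these bounds back into $C_{u,v^p}$ and then into the bound for $C$ produces an estimate of the required form. To obtain genuine monotonicity one can, at the final step, replace each intermediate quantity by a coarse explicit increasing function of the two input constants, so that enlarging either $[u]_{A_p^{\mathcal R}}$ or $[uv^p]_{A_r^{\mathcal R}}$ cannot decrease any of them; the resulting global function is then the desired $\mathscr E_{r,p}^n$.

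The main obstacle I anticipate is precisely this bookkeeping: the reverse H\"older exponent $s$ for $u$, the upgraded $A_\rho$ exponent for $uv^p$, the resulting $q=\rho s/(s-1)$, and the Buckley norm of $M_u$ all depend on non-canonical choices, and one must fix them concretely so that the individual bounds and their assembly are jointly monotone in $[u]_{A_p^{\mathcal R}}$ and $[uv^p]_{A_r^{\mathcal R}}$. The individual quantitative self-improvement estimates are classical, but threading them together into a single explicit increasing function, rather than simply an existence statement, is the delicate part; fortunately, taking deliberately non-sharp choices at each step (and the corresponding crude majorants) suffices, since $C$ is only required to be dominated by a monotone function, not computed sharply.
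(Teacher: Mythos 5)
Your proposal is correct and follows the paper's strategy in all essentials: start from the explicit constant in Theorem~\ref{sawyer} and Remark~\ref{constant}, quantify the choice of $q$ through the computation of Lemma~\ref{weights} (reverse H\"older exponent for $u$ controlled via $[u]_{A_\infty}\lesssim_{n,p}[u]_{A_p^{\mathcal R}}^{2p}$, and the elementary quantitative embedding of $A_r^{\mathcal R}$ into $A_{2r}$ via Kolmogorov's inequalities), and then majorize everything by a deliberately crude increasing function of the two constants. The one genuine divergence is the treatment of $\Vert M_u\Vert_{L^q(uv^{p(1-q)})}$: you pass to the dual weight via $[V^{1-q}]_{A_q(u)}=[V]_{A_{q'}(u)}^{q-1}$ and invoke a Buckley-type bound for $M_u$ on the space of homogeneous type $(\mathbb R^n,u\,dx)$ with constant controlled by $n$, $q$, and the doubling constant of $u$ (which is indeed bounded by $(2^np[u]_{A_p^{\mathcal R}})^p$), whereas the paper uses the sharp mixed bound from Hyt\"onen--P\'erez--Rela together with the bootstrap observation $[V]_{A_\infty(u)}\le \Vert M_u\Vert_{L^q(uV^{1-q})}$, which yields fully explicit constants in terms of $D_u=p\log_2(2^np[u]_{A_p^{\mathcal R}})$; your route is equally valid provided you cite a version of weighted Buckley whose constant is an explicit (hence majorizable, monotone) function of the doubling constant and of $q'$, since a bare ``depends on'' would not by itself give the required monotone $\mathscr E^n_{r,p}$. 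Two small points to fix: with your choice $q=\rho s/(s-1)$ the Lemma~\ref{weights} computation places $v^p$ in $A_q(u)$, while Remark~\ref{constant} needs $v^p\in A_{q'}(u)$, so you should set $q'=\rho s'$ (the paper takes $q'=2rs'$, which also keeps $1<q\le 2$ and makes the $q$-dependence of all constants harmless); and the case $r=1$ needs a one-line reduction (e.g.\ to $r=2$, using $[uv^p]_{A_2^{\mathcal R}}\le[uv^p]_{A_1}^{1/2}$), since Remark~\ref{constant} is stated for $r>1$.
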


\begin{proof}
We first discuss the case when $r>1$. We already know that we can take
$$
C=\left \{\begin{array}{lr}
24^n [u]_{A_1} C_{u,v}, & p=1, \\
4\cdot 24^n p' [u]_{A_p^{\mathcal R}} C_{u,v^p}^{1/p}, & p>1,
\end{array}\right.
$$
and in virtue of Remark~\ref{constant},
$$
C_{u,v^p}=2^q(2^n r [uv^p]_{A_r^{\mathcal R}})^{r(q-1)}\left \Vert M_u \right \Vert_{L^q(uv^{p(1-q)})}^q,
$$
where $r,q > 1$ are such that $uv^p\in A_r^{\mathcal R}$ and $v^p\in A_{q'}(u)$. For convenience, we write $V:=v^p$. Let us first bound the factor $\left \Vert M_u \right \Vert_{L^q(uV^{1-q})}^q$. For the space of homogeneous type $(\mathbb R^n,d_\infty,u(x)dx)$, it follows from the proof of Theorem 1.3 in \cite{hpr} that
$$
\left \Vert M_u \right \Vert_{L^q(uV^{1-q})}^q \leq 2^{q-1}q'40^{qD_u}(1+6\cdot 800^{D_u})[V]_{A_\infty(u)}[V]^{q-1}_{A_{q'}(u)},
$$
where $D_u:=p \log_2 (2^n p [u]_{A_p^{\mathcal R}})$. Now, given a cube $Q\subseteq \mathbb R^n$, and applying H\"older's inequality with exponent $q$, we have that
\begin{align*}
\int_Q M_u(V\chi_Q) u & =\int_Q \frac{M_u(V\chi_Q)}{V} uV \leq \left \Vert \frac{M_u(V\chi_Q)}{V}\right \Vert_{L^q(uV)}uV(Q)^{1/q'} \\ & = \left \Vert M_u(V\chi_Q) \right \Vert_{L^q(uV^{1-q})}uV(Q)^{1/q'} \\ & \leq \left \Vert M_u \right \Vert_{L^q(uV^{1-q})} \left \Vert V\chi_Q \right \Vert_{L^q(uV^{1-q})}uV(Q)^{1/q'} \\ & = \left \Vert M_u \right \Vert_{L^q(uV^{1-q})} uV(Q),
\end{align*}
and taking the supremum over all cubes $Q$, we get that $[V]_{A_\infty(u)}\leq \left \Vert M_u \right \Vert_{L^q(uV^{1-q})} $. Combining the previous estimates, we obtain that
$$
\left \Vert M_u \right \Vert_{L^q(uV^{1-q})}^q \leq (2^{q-1}q'40^{qD_u}(1+6\cdot 800^{D_u}))^{q'}[V]^{q}_{A_{q'}(u)}.
$$
Now, we will bound the factor $[V]^{q}_{A_{q'}(u)}$. In virtue of \cite[Proposition 2.2]{hype}, and using the definitions of $[u]_{A_{2p}}$ and $[u]_{A_{p}^{\mathcal R}}$, and Kolmogorov's inequalities, we can deduce that
$$
[u]_{A_\infty} \leq c_n [u]_{A_{2p}} \leq (2p-1)^{2p-1} c_n [u]_{A_p^{\mathcal R}}^{2p} =:c_{p,n}[u]_{A_p^{\mathcal R}}^{2p}, 
$$
and applying Theorem 2.3 in \cite{hpr}, $u\in RH_s$ for $s=1+\frac{1}{2^{n+1}c_{p,n}[u]_{A_p^{\mathcal R}}^{2p}-1}$, and $[u]_{RH_s}\leq 2$. Since $uV\in A_{2r}$, Lemma~\ref{weights} tells us that if we choose $q'=2rs'$, then
$$
[V]_{A_{q'}(u)}^q \leq [u]_{RH_s}^{qq'}[uV]_{A_{2r}}^q \leq 2^{qq'}(2r-1)^{q(2r-1)}[uV]_{A_r^{\mathcal R}}^{2rq}.
$$
Finally, observe that $q'=2^{n+2}r c_{p,n}[u]_{A_p^{\mathcal R}}^{2p}$, and $1<q\leq2$, so 
\begin{align*}
C_{u,V} & \leq 2^2(2^n r [uV]_{A_r^{\mathcal R}})^{r} \times (2q'40^{2D_u}(1+6\cdot 800^{D_u}))^{q'} \times 2^{2q'}(2r-1)^{4r-2}[uV]_{A_r^{\mathcal R}}^{4r} \\ & \leq 2^{2+nr}(2r-1)^{4r-2}r^r[uv^p]^{5r}_{A_r^{\mathcal R}} \left(2^{n+5}r c_{p,n}[u]_{A_p^{\mathcal R}}^{2p}40^{5p \log_2 (2^n p [u]_{A_p^{\mathcal R}})}\right)^{2^{n+2}r c_{p,n}[u]_{A_p^{\mathcal R}}^{2p}}\\ & =: C_{r,p}^n([u]_{A_p^{\mathcal R}},[uv^p]_{A_r^{\mathcal R}}),
\end{align*}
and the desired result follows, with 
$$
\mathscr E_{r,p}^n([u]_{A_p^{\mathcal R}},[uv^p]_{A_r^{\mathcal R}}) =\left \{\begin{array}{lr}
24^n [u]_{A_1} C_{r,1}^n([u]_{A_1},[uv]_{A_r^{\mathcal R}}), & p=1, \\
4\cdot 24^n p' [u]_{A_p^{\mathcal R}} C_{r,p}^n([u]_{A_p^{\mathcal R}},[uv^p]_{A_r^{\mathcal R}})^{1/p}, & p>1.
\end{array}\right.
$$

The case when $r=1$ follows, for example, from the case when $r=2$ and the fact that if $uv^p \in A_1$, then $[uv^p]_{A_2^{\mathcal R}}\leq [uv^p]_{A_2}^{1/2}\leq [uv^p]_{A_1}^{1/2}$.
\end{proof}

\begin{remark}
It would be interesting to obtain the sharp dependence of $C$ on the constants of the weights involved; our results are most certainly far from optimal.
\end{remark}

\section{Applications}

In this section, we will provide several applications of the Sawyer-type inequality established in Theorem \ref{sawyer}, obtaining mixed restricted weak type estimates for multi-variable maximal operators, sparse operators and Calder\'on-Zygmund operators.

The first result that we present is the converse of Theorem 3.3 in \cite{CR}, that was left as an open question. Combining both theorems, we obtain the complete characterization of the restricted weak type bounds of the operator $M^\otimes$ for $A_\infty$ weights. 

\begin{theorem}\label{prodhl}
Let $1\leq p_1,\dots, p_m <\infty $, and let $\frac{1}{p}=\frac{1}{p_1}+ \dots + \frac{1}{p_m}$. Let $w_1, \dots,w_m$ be weights, with $w_i \in A_{p_i}^{\mathcal R}$, $i=1,\dots,m$, and write $\nu_{\vec w}=w_1^{p/p_1}\dots w_m ^{p/p_m}$. Then, there exists a constant $C>0$ such that the inequality
$$
 \| M^\otimes (\vec f) \|_{L^{p,\infty}(\nu_{\vec w})} \leq C \prod_{i=1}^m \left \| f_i \right \|_{L^{p_i,1}(w_i)}
$$ 
holds for every vector of measurable functions $\vec f=(f_1,\dots,f_m)$.
\end{theorem}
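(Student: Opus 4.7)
The plan is to adapt the LOPTT-style iterative argument to the restricted weak setting, using Theorem~\ref{sawyer} in place of the classical Sawyer estimate. By a standard approximation, I may assume that each $f_i$ is bounded with compact support so that $Mf_i$ is almost everywhere finite and strictly positive, and set
\begin{equation*}
v_i := \prod_{j \neq i}(Mf_j)^{-1}, \qquad i = 1,\dots, m.
\end{equation*}
Lemma~\ref{pesos}(f), applied to the collection $\{f_j : j \neq i\}$, gives $v_i^{p_i} \in RH_\infty$ with a constant depending only on $m,n,p_i$, and Lemma~\ref{pesos}(g), together with $w_i \in A_{p_i}^{\mathcal R} \subseteq A_\infty$, gives $w_i v_i^{p_i} \in A_\infty$ with constants uniform in $\vec f$. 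Since $Mf_i / v_i = M^\otimes(\vec f)$, Theorem~\ref{sawyer} applied to $f_i$ with weight $v_i$ yields, for every $s>0$,
\begin{equation*}
(w_i v_i^{p_i})\bigl(\{M^\otimes(\vec f) > s\}\bigr) \leq C_i^{p_i}\, s^{-p_i}\, \|f_i\|_{L^{p_i,1}(w_i)}^{p_i},
\end{equation*}
with constants $C_i$ independent of $\vec f$, thanks to the monotone dependence on weight constants provided by Lemma~\ref{creciente}.

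The crucial algebraic identity, based on $\sum_{i=1}^m p/p_i = 1$ and $\prod_{i=1}^m v_i = (M^\otimes(\vec f))^{-(m-1)}$, is
\begin{equation*}
\prod_{i=1}^m (w_i v_i^{p_i})^{p/p_i} = \nu_{\vec w} \cdot (M^\otimes(\vec f))^{-p(m-1)}.
\end{equation*}
To pass from the measures $w_i v_i^{p_i}$ to $\nu_{\vec w}$, I decompose the level set dyadically: for $t > 0$ and $k \geq 0$, let $A_k := \{2^k t < M^\otimes(\vec f) \leq 2^{k+1} t\}$. On $A_k$ we have $(M^\otimes(\vec f))^{p(m-1)} \leq (2^{k+1}t)^{p(m-1)}$, so solving the identity above for $\nu_{\vec w}$ and integrating yields
\begin{equation*}
\nu_{\vec w}(A_k) \leq (2^{k+1}t)^{p(m-1)} \int_{A_k} \prod_{i=1}^m (w_i v_i^{p_i})^{p/p_i}\, dx.
\end{equation*}
Since $p_i/p \geq 1$ (because $1/p = \sum 1/p_i \geq 1/p_i$) and $\sum p/p_i = 1$, H\"older's inequality combined with the Sawyer bound on $A_k \subseteq \{M^\otimes(\vec f) > 2^k t\}$ raised to the power $p/p_i$ gives
\begin{equation*}
\int_{A_k} \prod_{i=1}^m (w_i v_i^{p_i})^{p/p_i}\, dx \leq \prod_{i=1}^m (w_i v_i^{p_i})(A_k)^{p/p_i} \leq C^{p}\, (2^k t)^{-mp}\, \prod_{i=1}^m \|f_i\|_{L^{p_i,1}(w_i)}^{p},
\end{equation*}
where $C = \prod_i C_i$.

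Combining the last two displays and simplifying the exponents (the factor $2^{p(m-1)} \cdot 2^{kp(m-1)} \cdot 2^{-kmp}$ collapses to $2^{p(m-1)} 2^{-kp}$, and the powers of $t$ combine to $t^{-p}$) one obtains $\nu_{\vec w}(A_k) \lesssim 2^{-kp}\, t^{-p}\, \prod_i \|f_i\|_{L^{p_i,1}(w_i)}^p$. Summing the geometric series over $k \geq 0$ produces
\begin{equation*}
\nu_{\vec w}\bigl(\{M^\otimes(\vec f) > t\}\bigr) \leq C'\, t^{-p}\, \prod_{i=1}^m \|f_i\|_{L^{p_i,1}(w_i)}^p, \qquad t>0,
\end{equation*}
which is exactly the claimed restricted weak type estimate. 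The main technical hurdle is ensuring that the constants produced by the $m$ applications of Theorem~\ref{sawyer} stay independent of $\vec f$; this is precisely why the uniform $RH_\infty$ and $A_\infty$ control in Lemma~\ref{pesos}(f)--(g), together with the monotone bound from Lemma~\ref{creciente}, is essential.
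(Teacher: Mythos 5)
Your proof is correct and follows essentially the same route as the paper: the same weights $v_i=\prod_{j\neq i}(Mf_j)^{-1}$ fed into Theorem~\ref{sawyer}, uniformity of constants via Lemma~\ref{pesos}(f)--(g) and Lemma~\ref{creciente}, H\"older with exponents $p_i/p$, and a dyadic level-set decomposition. Your direct summation over the annuli $A_k$ is just a cosmetic reorganization of the paper's iteration argument (where the tail term is shown to vanish), so there is nothing substantive to add.
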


\begin{proof}
The case when $p_1=\dots=p_m=1$ was proved in \cite{LOPTT}, and we build upon that proof to demonstrate the remaining cases. 

We can assume, without loss of generality, that $f_i \in L^{\infty}_c(\mathbb R^n)$, $i=1,\dots,m$. Fix $t >0$ and define 
$$E_t:=\{x\in \mathbb R^n : t<M^\otimes(\vec f)(x) \leq 2t\}.
$$
For $i=1,\dots,m$, and taking $\tilde v_i:=\prod_{j\neq i} (Mf_j)^{-1}$, we have that $$E_t = \{x\in \mathbb R^n : t\tilde v_i(x) < Mf_i(x) \leq 2t \tilde v_i(x)  \}.$$ Using the fact that $\tilde v_i\in RH_{\infty}$, with constant independent of $\vec f$ (see Lemma \ref{pesos}), H\"older's inequality, and Theorem~\ref{sawyer}, we obtain that
\begin{align*}
\lambda_{M^\otimes(\vec f)}^{\nu_{\vec w}}(t) - \lambda_{M^\otimes(\vec f)}^{\nu_{\vec w}}(2t)  &= \int_{E_t} \nu_{\vec w} \leq \int_{E_t} \left(\frac{M^\otimes(\vec f)}{t} \right)^p \nu_{\vec w} \\ & \leq \frac{1}{t^p} \prod_{i=1}^m  \left(\int_{E_t}(Mf_i)^{p_i}w_i \right)^{p/p_i} \\ & \leq 2^{mp} t^{(m-1)p} \prod_{i=1}^m \left(\int_{\left \{ \frac{Mf_i}{\tilde v_i}>t\right \}}\tilde v_i^{p_i}w_i \right)^{p/p_i} \\ & \leq 2^{mp} C_1^p \dots C_m ^p \frac{1}{t^p} \prod_{i=1}^m \left \| f_i \right \|_{L^{p_i,1}(w_i)}^p.
\end{align*} 
Iterating this result, we get that for each $t>0$ and every natural number $N$,
$$
\lambda_{M^\otimes(\vec f)}^{\nu_{\vec w}}(t) \leq 2^{mp} C_1^p \dots C_m ^p \left(\sum_{j=0}^N \frac{1}{2^{jp}} \right) \frac{1}{t^p} \prod_{i=1}^m \left \| f_i \right \|_{L^{p_i,1}(w_i)}^p + \lambda_{M^\otimes(\vec f)}^{\nu_{\vec w}}(2^{N+1}t),
$$
and letting $N$ tend to infinity, the last term vanishes, and we conclude that
$$
\lambda_{M^\otimes(\vec f)}^{\nu_{\vec w}}(t) \leq \frac{2^{(m+1)p}}{2^p -1} C_1^p \dots C_m ^p \frac{1}{t^p} \prod_{i=1}^m \left \| f_i \right \|_{L^{p_i,1}(w_i)}^p.
$$

Observe that in virtue of Lemma \ref{pesos}, for $i=1,\dots,m$, we have that $w_i\tilde v_i ^{p_i}\in A_{s_i}$, where $s_i>1$ depends only on $m,n,p_i$, and 
\begin{equation*}
    [w_i\tilde v_i ^{p_i}]_{A_{s_i}^{\mathcal R}}^{s_i} \leq [w_i\tilde v_i ^{p_i}]_{A_{s_i}}\lesssim_{m,n,p_i} [w_i]_{A_{2p_i}}\lesssim_{m,n,p_i} [w_i]_{A_{p_i}^{\mathcal R}}^{2p_i},
\end{equation*} so by Lemma \ref{creciente}, we have that $C_i \leq \mathscr E ^n _ {s_i,p_i}([w_i]_{A_{p_i}^{\mathcal R}},C_{m,n,p_i}[w_i]_{A_{p_i}^{\mathcal R}} ^{2p_i/s_i})$, and hence, the desired result follows, with
$$
C=\frac{2^{m+1}}{(2^p -1)^{1/p}} \prod_{i= 1}^m \mathscr E ^n _ {s_i,p_i}([w_i]_{A_{p_i}^{\mathcal R}},C_{m,n,p_i}[w_i]_{A_{p_i}^{\mathcal R}} ^{2p_i/s_i}),
$$ 
which depends on the constants of the weights $w_1, \dots, w_m$ in an increasing way.
\end{proof}

The next application that we provide is an extension of Theorem~\ref{sawyer} to the multi-variable setting, which in turn, extends Theorem~\ref{prodhl}. The proof is based on the previous one, and is similar to that of Theorem 1.4 in \cite{kob}.

\begin{theorem}\label{msawyer}
Let $1\leq p_1,\dots, p_m <\infty $, and let $\frac{1}{p}=\frac{1}{p_1}+ \dots + \frac{1}{p_m}$. Let $w_1, \dots,w_m$ be weights, with $w_i \in A_{p_i}^{\mathcal R}$, $i=1,\dots,m$, and write $\nu_{\vec w}=w_1^{p/p_1}\dots w_m ^{p/p_m}$. Let $v$ be a weight such that $\nu_{\vec w}v^p$ is a weight, and $w_iv^{p_i}\in A_\infty$, $i=1,\dots,m$. Then, there exists a constant $C>0$ such that the inequalities
$$
\left \| \frac{\mathcal M (\vec f)}{v} \right \|_{L^{p,\infty}(\nu_{\vec w}v^p)} \leq \left \| \frac{M^\otimes (\vec f)}{v} \right \|_{L^{p,\infty}(\nu_{\vec w}v^p)} \leq C \prod_{i=1}^m \left \| f_i \right \|_{L^{p_i,1}(w_i)}
$$ 
hold for every vector of measurable functions $\vec f=(f_1,\dots,f_m)$.
\end{theorem}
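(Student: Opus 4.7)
The first inequality is immediate from the pointwise bound $\mathcal{M}(\vec f)(x) \le M^{\otimes}(\vec f)(x)$, so the plan is to prove the second one, following closely the iteration scheme of Theorem~\ref{prodhl} but with the weight $v$ carried along. By density, I would reduce to the case $f_i \in L^{\infty}_c(\mathbb{R}^n)$, which guarantees that $Mf_i$ is finite almost everywhere and the distribution function $\lambda(t) := \nu_{\vec w} v^p(\{M^{\otimes}(\vec f)/v > t\})$ is finite and tends to $0$ as $t \to \infty$. Introduce the auxiliary weights $\tilde v_i := \prod_{j \ne i}(Mf_j)^{-1}$, which by Lemma~\ref{pesos}(f) satisfy $\tilde v_i^{p_i} \in RH_{\infty}$ with a constant depending only on $m,n,p_i$.

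Fix $t > 0$ and set $E_t := \{x \in \mathbb{R}^n : t < M^{\otimes}(\vec f)(x)/v(x) \le 2t\}$. The key telescoping inequality is
\begin{align*}
\lambda(t) - \lambda(2t)
&= \int_{E_t} \nu_{\vec w} v^p
\le t^{-p} \int_{E_t} (M^{\otimes}(\vec f))^p \nu_{\vec w} \\
&\le t^{-p} \prod_{i=1}^m \left(\int_{E_t}(Mf_i)^{p_i} w_i\right)^{p/p_i},
\end{align*}
where the last step is H\"older's inequality with the exponents $p_i/p$ (whose reciprocals sum to $1$). On $E_t$ we have $Mf_i \le 2t\, v\tilde v_i$ and also $E_t \subseteq \{Mf_i/(v\tilde v_i) > t\}$, so
\[
\int_{E_t}(Mf_i)^{p_i} w_i \le (2t)^{p_i}\bigl(w_i(v\tilde v_i)^{p_i}\bigr)\bigl(\{Mf_i/(v\tilde v_i) > t\}\bigr).
\]

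Here is where Theorem~\ref{sawyer} enters: I would apply it with the weight $u = w_i$ and with $v\tilde v_i$ playing the role of $v$. The hypothesis $w_i \in A_{p_i}^{\mathcal R}$ is given; the other needed hypothesis is $w_i(v\tilde v_i)^{p_i} \in A_{\infty}$, and this is exactly where the assumptions $w_i v^{p_i} \in A_{\infty}$ and $\tilde v_i^{p_i} \in RH_{\infty}$ combine via Lemma~\ref{pesos}(d) to produce $w_i(v\tilde v_i)^{p_i} = (w_i v^{p_i})\,\tilde v_i^{p_i} \in A_{\infty}$. Theorem~\ref{sawyer} then gives $\bigl(w_i(v\tilde v_i)^{p_i}\bigr)(\{Mf_i/(v\tilde v_i) > t\}) \le (C_i/t)^{p_i} \|f_i\|_{L^{p_i,1}(w_i)}^{p_i}$. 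Plugging this back and simplifying yields
\[
\lambda(t) - \lambda(2t) \le \frac{2^{mp}}{t^p}\, (C_1 \cdots C_m)^p \prod_{i=1}^m \|f_i\|_{L^{p_i,1}(w_i)}^p.
\]
Iterating the dyadic telescoping $\lambda(t) \le \sum_{j=0}^{N}[\lambda(2^j t) - \lambda(2^{j+1} t)] + \lambda(2^{N+1} t)$, summing the geometric series in $j$, and letting $N \to \infty$ (so that $\lambda(2^{N+1} t) \to 0$) yields the desired weak-type bound with $C = 2^{m+1}(2^p-1)^{-1/p} C_1 \cdots C_m$.

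The main subtlety, and where I would spend the most care, is that the constants $C_i$ from Theorem~\ref{sawyer} depend on $v\tilde v_i$, which itself depends on $\vec f$, and must nevertheless be bounded uniformly in $\vec f$. To close this loop I would invoke Lemma~\ref{creciente}: it suffices to control $[w_i(v\tilde v_i)^{p_i}]_{A_r^{\mathcal R}}$ for some $r$, uniformly in $\vec f$. Since $w_i v^{p_i} \in A_\infty$ lies in some $A_{q_i}$, and $\tilde v_i^{p_i} \in RH_\infty$ with $[\tilde v_i^{p_i}]_{RH_\infty} \le c_{m,n,p_i}$, Lemma~\ref{pesos}(c)–(d) places $w_i(v\tilde v_i)^{p_i}$ in some $A_{s_i}$ with constant depending only on $m, n, p_i, [w_i]_{A_{p_i}^{\mathcal R}}$ and $[w_i v^{p_i}]_{A_\infty}$, all of which are fixed. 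Hence each $C_i$ is an absolute quantity in this sense, which completes the proof with a final constant depending only on the weight constants and $m,n,\vec P$.
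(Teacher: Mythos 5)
Your argument is essentially the paper's: the same telescoping over dyadic levels, the same application of H\"older with exponents $p_i/p$, the same use of Theorem~\ref{sawyer} with $u=w_i$ and $v\tilde v_i$ in place of $v$ (justified via Lemma~\ref{pesos}, since $w_i(v\tilde v_i)^{p_i}=(w_iv^{p_i})\tilde v_i^{p_i}$ with $\tilde v_i^{p_i}\in RH_\infty$ uniformly in $\vec f$), and the same appeal to Lemma~\ref{creciente} to make the constants $C_i$ independent of $\vec f$. However, there is one genuine gap: you assert that taking $f_i\in L^\infty_c$ ``guarantees'' that $\lambda(t)=\nu_{\vec w}v^p\bigl(\{M^\otimes(\vec f)/v>t\}\bigr)$ is finite and tends to $0$ as $t\to\infty$, and your iteration needs both facts (finiteness of $\lambda(2t)$ to make sense of the difference $\lambda(t)-\lambda(2t)$, and the vanishing of $\lambda(2^{N+1}t)$ to close the telescoping). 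Neither is automatic here. In Theorem~\ref{prodhl} (the case $v\equiv 1$) this is harmless because $M^\otimes(\vec f)(x)\to 0$ as $|x|\to\infty$, so each level set is bounded and has finite $\nu_{\vec w}$-measure; but with a general $v$ the set $\{M^\otimes(\vec f)/v>t\}$ can be unbounded (wherever $v$ dips, the quotient blows up), the measure $\nu_{\vec w}v^p\,dx$ has infinite total mass, and finiteness of $\lambda(t)$ is essentially part of the conclusion you are trying to prove, so it cannot be assumed a priori.

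The paper resolves exactly this point by truncating: it works with $E_y^R=\{x:|x|<R,\ yv(x)<M^\otimes(\vec f)(x)\le 2yv(x)\}$, so all the truncated distribution functions are finite by local integrability of $\nu_{\vec w}v^p$, the truncated tail vanishes as $N\to\infty$ for each fixed $R$, the resulting bound is uniform in $R$, and one then lets $R\to\infty$. Your proof becomes correct once you insert this truncation (or, alternatively, once you supply an a priori finiteness argument, e.g.\ noting that on $\{M^\otimes(\vec f)/v>t\}=\{Mf_i/(v\tilde v_i)>t\}$ one has $\tilde v_i^{-1}\le\prod_{j\ne i}\|f_j\|_\infty$, so Theorem~\ref{sawyer} already gives $w_iv^{p_i}$-finiteness of this set and H\"older transfers it to $\nu_{\vec w}v^p$); as written, though, the limiting step is unjustified.
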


\begin{proof}
The first inequality follows from the fact that $\mathcal M(\vec f) \leq M^\otimes (\vec f)$. For the second one, we can assume, without loss of generality, that $f_i \in L^{\infty}_c(\mathbb R^n)$, $i=1,\dots,m$. Fix $y,R >0$ and define 
$$E_y^R:=\{x\in \mathbb R^n : |x|<R, \, yv(x)<M^\otimes(\vec f)(x) \leq 2yv(x)\}.
$$
For $i=1,\dots,m$, and taking $\tilde v_i:=\prod_{j\neq i} (Mf_j)^{-1}$, and $v_i:=\tilde v_i v$, we have that $$E_y^R = \{x\in \mathbb R^n : |x|<R, \, y v_i(x) < Mf_i(x) \leq 2y v_i(x) \}.$$ Since $\tilde v_i\in RH_{\infty}$, and $w_i v^{p_i} \in A_{\infty}$, we have that $w_i  v_i ^{p_i} \in A_{\infty}$, with constant independent of $\vec f$ (see Lemma \ref{pesos}). In virtue of H\"older's inequality and Theorem~\ref{sawyer}, we get that
\begin{align*}
  &\nu_{\vec w}v^p \left(\left \{x\in \mathbb R^n : |x|<R, \, \frac{M^\otimes(\vec f)(x)}{v(x)}>y\right \}\right) - \nu_{\vec w}v^p \left(\left \{x\in \mathbb R^n : |x|<R, \, \frac{M^\otimes(\vec f)(x)}{v(x)}>2y\right \}\right) \\ &= \int_{E_y^R} \nu_{\vec w}v^p \leq \int_{E_y^R} \left(\frac{M^\otimes(\vec f)}{y} \right)^p \nu_{\vec w} \leq \frac{1}{y^p} \prod_{i=1}^m  \left(\int_{E_y^R}(Mf_i)^{p_i}w_i \right)^{p/p_i} \\ & \leq 2^{mp} y^{(m-1)p} \prod_{i=1}^m \left(\int_{\left \{\frac{Mf_i}{v_i}>y\right \}}v_i^{p_i}w_i \right)^{p/p_i} \leq 2^{mp} C_1^p \dots C_m ^p \frac{1}{y^p} \prod_{i=1}^m \left \| f_i \right \|_{L^{p_i,1}(w_i)}^p.
\end{align*} 
Iterating this result, we deduce that for each $y>0$ and every natural number $N$,
\begin{align*}
    &\nu_{\vec w}v^p \left(\left \{x\in \mathbb R^n : |x|<R, \, \frac{M^\otimes(\vec f)(x)}{v(x)}>y\right \}\right) \leq 2^{mp} C_1^p \dots C_m ^p \left(\sum_{j=0}^N \frac{1}{2^{jp}} \right) \frac{1}{y^p} \prod_{i=1}^m \left \| f_i \right \|_{L^{p_i,1}(w_i)}^p \\ & + \nu_{\vec w}v^p \left(\left \{x\in \mathbb R^n : |x|<R, \, \frac{M^\otimes(\vec f)(x)}{v(x)}>2^{N+1}y\right \}\right),
\end{align*}
and letting first $N$ tend to infinity, and then $R$, the last term vanishes, and we conclude that
$$
\lambda_{\frac{M^\otimes(\vec f)}{v}}^{\nu_{\vec w}v^p}(y) \leq \frac{2^{(m+1)p}}{2^p -1} C_1^p \dots C_m ^p \frac{1}{y^p} \prod_{i=1}^m \left \| f_i \right \|_{L^{p_i,1}(w_i)}^p.
$$

For $i=1,\dots,m$, if we take $q_i>1$ such that $w_i v^{p_i}\in A_{q_i}^{\mathcal R}$, in virtue of Lemma \ref{pesos}, we have that $w_i v_i ^{p_i}\in A_{s_i}$, where $s_i>1$ depends only on $m,n,p_i,q_i$, and $[w_i v_i ^{p_i}]_{A_{s_i}^{\mathcal R}}^{s_i}\lesssim_{m,n,p_i,q_i} [w_iv^{p_i}]_{A_{q_i}^{\mathcal R}}^{2q_i}$, so by Lemma \ref{creciente}, we have that $C_i \leq \mathscr E ^n _ {s_i,p_i}([w_i]_{A_{p_i}^{\mathcal R}},C_{m,n,p_i,q_i}[w_iv^{p_i}]_{A_{q_i}^{\mathcal R}} ^{2q_i/s_i})$, and hence, the desired result follows, with
$$
C=\frac{2^{m+1}}{(2^p -1)^{1/p}} \prod_{i= 1}^m \mathscr E ^n _ {s_i,p_i}([w_i]_{A_{p_i}^{\mathcal R}},C_{m,n,p_i,q_i}[w_iv^{p_i}]_{A_{q_i}^{\mathcal R}} ^{2q_i/s_i}).
$$ 
\end{proof}

\begin{remark}
In the case when $p_1=\dots=p_m=1$, the previous result is a corollary of Theorem 1.4 in \cite{kob}.
\end{remark}

Observe that if we take weights $w_i\in A_{p_i}^{\mathcal R}$, $i=1,\dots,m$, and $v\in RH_{\infty}$, then the hypotheses of Theorem~\ref{msawyer} are satisfied.

The next result will be crucial to work with Calder\'on-Zygmund operators in the mixed restricted weak setting.

\begin{theorem}\label{sparsemax} 
Let $0<p<\infty$, let $\mathcal S$ be an $\eta$-sparse collection of cubes, and let $v,w$ be weights. Suppose that there exists $0<\varepsilon \leq 1$ such that $\varepsilon < p$, $wv^{-\varepsilon} \in A_\infty$, and 
$$
[v^{-\varepsilon}]_{RH_\infty(w)}:=\sup_Q \frac{w(Q)}{wv^{-\varepsilon}(Q)}\Vert \chi_Q v^{-\varepsilon}\Vert_{L^{\infty}(w)}<\infty. 
$$
Then, there exists a constant $C>0$, independent of $\mathcal S$, such that the inequality
$$
\left \| \frac{\mathcal{A}_{\mathcal S}(\vec{f})}{v} \right \|_{L^{p,\infty}(w)} \leq C \left \| \frac{\mathcal{M}(\vec{f})}{v} \right \|_{L^{p,\infty}(w)} 
$$
holds for every vector of measurable functions $\vec f=(f_1,\dots,f_m)$.
\end{theorem}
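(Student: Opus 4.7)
The plan is to use the triple-bar form of the $L^{p,\infty}(w)$ quasi-norm with exponent $r=\varepsilon$ (admissible since $\varepsilon<p$) to reduce the claim to proving that
\[
\int_E \mathcal{A}_{\mathcal S}(\vec f)^{\varepsilon}\,u\;\lesssim\; w(E)^{1-\varepsilon/p}\,\|\mathcal M(\vec f)/v\|_{L^{p,\infty}(w)}^{\varepsilon}
\]
for every measurable set $E$ with $0<w(E)<\infty$, where $u:=wv^{-\varepsilon}$. Taking the $1/\varepsilon$ power, multiplying by $w(E)^{1/p-1/\varepsilon}$ and taking the supremum in $E$ yields $\vertiii{\mathcal A_{\mathcal S}(\vec f)/v}_{L^{p,\infty}(w)}\lesssim \|\mathcal M(\vec f)/v\|_{L^{p,\infty}(w)}$, and then the comparison $\|\cdot\|_{L^{p,\infty}(w)}\le\vertiii{\cdot}_{L^{p,\infty}(w)}$ delivers the stated bound.

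To prove the reduced inequality, I first use $\varepsilon\le 1$ and the subadditivity $(\sum a_Q)^\varepsilon\le\sum a_Q^\varepsilon$ to write $\mathcal A_{\mathcal S}(\vec f)^\varepsilon\le \sum_{Q\in\mathcal S} b_Q\chi_Q$ with $b_Q:=\prod_{i=1}^m(\fint_Q|f_i|)^\varepsilon$, which bounds the left-hand side by $\sum_Q b_Q u(Q\cap E)$. After the standard $2^n$-grid reduction, I may assume that $\mathcal S$ lies in a single dyadic grid $\mathscr D$. The sparseness of $\mathcal S$ provides disjoint $E_Q\subseteq Q$ with $|E_Q|\ge\eta|Q|$, and since $u\in A_\infty$, this yields $u(E_Q)\ge c_u\,u(Q)$. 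For $x\in E_Q\subseteq Q$ one has $M_u^{\mathscr D}\chi_E(x)\ge u(Q\cap E)/u(Q)$, hence $u(Q\cap E)\le c_u^{-1}\int_{E_Q} M_u^{\mathscr D}\chi_E\,u$. Summing in $Q$, using the disjointness of the $E_Q$ together with the pointwise bound $b_Q\chi_Q\le \mathcal M(\vec f)^\varepsilon$ (valid since $\prod_i\fint_Q|f_i|\le\mathcal M(\vec f)(x)$ for $x\in Q$), one arrives at
\[
\int_E \mathcal A_{\mathcal S}(\vec f)^\varepsilon u\;\le\; c_u^{-1}\int \mathcal M(\vec f)^\varepsilon M_u^{\mathscr D}\chi_E\,u\;=\;c_u^{-1}\int (\mathcal M(\vec f)/v)^\varepsilon M_u^{\mathscr D}\chi_E\,w.
\]

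The last integral is estimated by H\"older's inequality for Lorentz spaces with the dual pair $(L^{p/\varepsilon,\infty}(w), L^{p/(p-\varepsilon),1}(w))$: the first factor is exactly $\|\mathcal M(\vec f)/v\|_{L^{p,\infty}(w)}^\varepsilon$. The second factor, $\|M_u^{\mathscr D}\chi_E\|_{L^{p/(p-\varepsilon),1}(w)}$, is where the $RH_\infty(w)$ hypothesis enters: for each maximal dyadic cube $Q$ in the Calder\'on--Zygmund decomposition of the level set $\{M_u^{\mathscr D}\chi_E>y\}$ one has
\[
y\,u(Q)<u(Q\cap E)=\int_{Q\cap E}v^{-\varepsilon}w\le [v^{-\varepsilon}]_{RH_\infty(w)}\,\frac{u(Q)}{w(Q)}\,w(Q\cap E),
\]
which, after dividing by $u(Q)$ and summing, gives the weak-type estimate $w(\{M_u^{\mathscr D}\chi_E>y\})\le [v^{-\varepsilon}]_{RH_\infty(w)}\,w(E)/y$. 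Since $M_u^{\mathscr D}\chi_E\le 1$, a layer-cake integration then yields $\|M_u^{\mathscr D}\chi_E\|_{L^{p/(p-\varepsilon),1}(w)}\lesssim_{p,\varepsilon}[v^{-\varepsilon}]_{RH_\infty(w)}^{(p-\varepsilon)/p}w(E)^{(p-\varepsilon)/p}$, and the exponents of $w(E)$ cancel exactly. The key obstacle is precisely this weak-type transfer from the $u$-adapted averages selected by the Calder\'on--Zygmund stopping procedure to the $w$-measure of the resulting level sets: without $[v^{-\varepsilon}]_{RH_\infty(w)}<\infty$, no such pointwise comparison between $u$ and $w$ is available, and the two measures could be wildly incompatible on the stopping cubes.
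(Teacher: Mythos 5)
Your argument is correct in substance and follows essentially the same strategy as the paper: Kolmogorov's inequalities with exponent $\varepsilon$, the use of $\varepsilon\le 1$ to pass the power inside the sparse sum, absorption into the disjoint sets $E_Q$ via $wv^{-\varepsilon}\in A_\infty$, pointwise domination by $\mathcal M(\vec f)^\varepsilon$ times a weighted maximal function of $\chi_E$, H\"older's inequality for the pair $L^{p/\varepsilon,\infty}(w)\times L^{(p/\varepsilon)',1}(w)$, and a weak $(1,1)$ bound with respect to $w$ for the $wv^{-\varepsilon}$-maximal operator coming from $[v^{-\varepsilon}]_{RH_\infty(w)}<\infty$. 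Two points differ from the paper. First, you bound $\Vert M_u^{\mathscr D}\chi_E\Vert_{L^{(p/\varepsilon)',1}(w)}$ directly by layer-cake integration using $M_u^{\mathscr D}\chi_E\le 1$ and your weak-type estimate, instead of the paper's Marcinkiewicz interpolation; this is fine and yields the same factor $w(E)^{1-\varepsilon/p}$. Second, and this is the one step you should justify rather than assert: the theorem concerns an arbitrary $\eta$-sparse family of cubes, and the ``standard $2^n$-grid reduction'' applies to maximal operators, not verbatim to a sum over a sparse family. To carry it out you must replace each $Q\in\mathcal S$ by a dyadic $Q_\alpha\supseteq Q$ with $l_{Q_\alpha}\le 6\, l_Q$ (so each average, and hence the product, grows by at most a factor $6^{nm}$) and then check that the resulting dyadic families are still sparse; since several cubes of $\mathcal S$ may be assigned the same $Q_\alpha$, you need the bounded-multiplicity observation (at most $6^n/\eta$ cubes can share a given $Q_\alpha$, because their pairwise disjoint sets $E_Q$ each occupy a fraction at least $\eta 6^{-n}$ of it), after which you split into finitely many genuine sparse families. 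Alternatively, you can avoid the reduction altogether, as the paper does, by working with the centered weighted maximal operator $M_W^c$, $W=wv^{-\varepsilon}$: for $z\in E_Q$ one has $E_Q\subseteq Q\subseteq Q^z\subseteq 3Q$, so one replaces your $u(Q)$ by $W(3Q)$, absorbs $W(3Q)\lesssim W(E_Q)$ using $W\in A_r^{\mathcal R}$, and obtains the weak $(1,1)$ bound for $M_W^c$ with respect to $w$ from the same $RH_\infty(w)$ computation via a Besicovitch-type covering. With either route the constant depends only on $n,p,\varepsilon,\eta$ and the weight constants, hence is independent of $\mathcal S$, as required.
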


\begin{proof}
In virtue of Kolmogorov's inequalities, we obtain that
$$
\left \| \frac{\mathcal{A}_{\mathcal S}(\vec{f})}{v} \right \|_{L^{p,\infty}(w)} \leq \sup_{0<w(F)<\infty} \left \| \frac{\mathcal{A}_{\mathcal S}(\vec{f})}{v}\chi_F \right \|_{L^{\varepsilon}(w)}w(F)^{\frac{1}{p}-\frac{1}{\varepsilon}},
$$ 
where the supremum is taken over all measurable sets $F$ with $0<w(F)<\infty$. For one of such sets $F$, and $W:=wv^{-\varepsilon}$, we have that
$$
\begin{aligned}
&\left \| \frac{\mathcal{A}_{\mathcal S}(\vec{f})}{v}\chi_F \right \|_{L^{\varepsilon}(w)}^\varepsilon \leq \int_{\mathbb R^n} \sum_{Q \in \mathcal S} \chi_{Q} \left(\frac{\prod_{i=1}^m\fint_{Q}|f_i|}{v}\right)^\varepsilon \chi_F w \\ & = \sum_{Q \in \mathcal S}  \left(\prod_{i=1}^m \fint_{Q}|f_i|\right)^\varepsilon \left(\frac{1}{W(3Q)}\int_{Q}\chi_F W\right)W(3Q)=:I.
\end{aligned}
$$  

Since $W\in A_{\infty}$, there exists $r\geq 1$ such that $W\in A_r^{\mathcal R}$. Hence, 
$$\sup_Q \sup_{E\subseteq Q} \frac{|E|}{|Q|}\left(\frac{W(Q)}{W(E)}\right)^{1/r} = \Vert W \Vert_{A_r^{\mathcal R}} <\infty.$$
By hypothesis, $\mathcal S$ is $\eta$-sparse, so for each $Q\in \mathcal S$, $W(3Q)\leq (\frac{3^n}{\eta}\Vert W \Vert_{A_r^{\mathcal R}})^r W(E_Q)$. Using this, we get that
$$
\begin{aligned}
I & \leq \left(\frac{3^n}{\eta}\Vert W \Vert_{A_r^{\mathcal R}}\right)^r \sum_{Q\in \mathcal S} \left(\prod_{i=1}^m\fint_{Q}|f_i|\right)^\varepsilon \left(\frac{1}{W(3Q)}\int_{Q}\chi_F W\right)W(E_Q) \\ & = \left(\frac{3^n}{\eta}\Vert W \Vert_{A_r^{\mathcal R}}\right)^r \sum_{Q\in \mathcal S} \int_{E_Q} \left(\prod_{i=1}^m \fint_{Q}|f_i|\right)^\varepsilon \left(\frac{1}{W(3Q)}\int_{Q}\chi_F W\right) W =: II.
\end{aligned}
$$

The sides of an $n$-dimensional cube have Lebesgue measure $0$ in $\mathbb R^n$, so we can assume that the cubes in $\mathcal S$ are open. For $Q\in \mathcal S$ and $z\in E_Q$, we define $Q^z:=Q(z,l_Q)$, the open cube of center $z$ and side length twice the side length of $Q$. We have that $E_Q \subseteq Q \subseteq Q^z \subseteq 3Q$, so
$$
\left(\prod_{i=1}^m \fint_{Q}|f_i| \right)\chi_{E_Q}(z) \leq \mathcal{M}(\vec{f})(z),
$$
and
$$
\frac{1}{W(3Q)}\int_{Q}\chi_F W \leq \frac{1}{W(Q^z)}\int_{Q^z}\chi_F W \leq M_W^c(\chi_F)(z).
$$

Since the sets $\{E_Q\}_{Q\in \mathcal S}$ are pairwise disjoint, and using H\"older's inequality with exponent $\frac{p}{\varepsilon}>1$,
$$
\begin{aligned}
II & \leq \left(\frac{3^n}{\eta}\Vert W \Vert_{A_r^{\mathcal R}}\right)^r \int_{\mathbb R^n} \mathcal{M}(\vec{f})^\varepsilon M_W^c(\chi_F) W \\ & \leq \left(\frac{3^n}{\eta}\Vert W \Vert_{A_r^{\mathcal R}}\right)^r \left \| \left(\frac{\mathcal{M}(\vec{f})}{v}\right) ^\varepsilon \right \|_{L^{p/\varepsilon,\infty}(w)}\left \| M_W^c(\chi_F) \right \|_{L^{(p/\varepsilon)',1}(w)} \\ & \leq \frac{p}{p-\varepsilon} \left(\frac{3^n}{\eta}\Vert W \Vert_{A_r^{\mathcal R}}\right)^r \left \| M_W^c \right \|_{L^{(p/\varepsilon)',1}(w)} w(F)^{1-\frac{\varepsilon}{p}} \left \| \frac{\mathcal{M}(\vec{f})}{v}\right \|_{L^{p,\infty}(w)}^\varepsilon.  
\end{aligned}
$$

Observe that for every measurable function $g$, $\Vert M_W ^c (g)\Vert_{L^\infty(w)} \leq \Vert g \Vert_{L^\infty(w)}$, and by standard arguments (see \cite[Theorem 7.1.9]{grafclas}), it is easy to show that $$\Vert M_W ^c (g) \Vert_{L^{1,\infty}(w)}\lesssim_n  [v^{-\varepsilon}]_{RH_\infty(w)}\Vert g \Vert_{L^1(w)}.$$ 
In particular, and applying Marcinkiewicz's interpolation theorem (see \cite[Theorem 4.13]{BS}), we conclude that 
$$\left \| M_W^c \right \|_{L^{(p/\varepsilon)',1}(w)} \leq c_{n,p,\varepsilon}[v^{-\varepsilon}]_{RH_\infty(w)}^{1-\frac{\varepsilon}{p}}<\infty.$$

Combining the previous estimates, we obtain that
$$
\begin{aligned}
&\left \| \frac{\mathcal{A}_{\mathcal S}(\vec{f})}{v}\chi_F \right \|_{L^{\varepsilon}(w)}w(F)^{\frac{1}{p}-\frac{1}{\varepsilon}} \\ & \leq  \left(\frac{p}{p-\varepsilon} \left(\frac{3^n}{\eta}\Vert W \Vert_{A_r^{\mathcal R}}\right)^r c_{n,p,\varepsilon}[v^{-\varepsilon}]_{RH_\infty(w)}^{1-\frac{\varepsilon}{p}}\right)^{1/\varepsilon} \left \| \frac{\mathcal{M}(\vec{f})}{v}\right \|_{L^{p,\infty}(w)},
\end{aligned}
$$
and the desired result follows, with
$$C=\inf_{r\geq 1: \, W\in A_r^{\mathcal R}}\left(\frac{p}{p-\varepsilon} \left(\frac{3^n}{\eta}\Vert W \Vert_{A_r^{\mathcal R}}\right)^r c_{n,p,\varepsilon}[v^{-\varepsilon}]_{RH_\infty(w)}^{1-\frac{\varepsilon}{p}}\right)^{1/\varepsilon}.$$
\end{proof}

\begin{remark}
For $0<p\leq 1$, if we take $v$ such that $v^\delta \in A_\infty$ for some $\delta>0$, and $w=uv^p$, with $u\in A_1$, then the previous result can be established via an extrapolation argument (see \cite[Theorem 1.1]{ompe}). 
\end{remark}

Under the conditions that $0<p\leq 1$, and $w=uv^p$, we can find weights $u$ and $v$ that satisfy the hypotheses of Theorem 1.1 in \cite{ompe} but not the ones of Theorem~\ref{sparsemax}, and vice versa. If we take a non-constant weight $u\in A_1$, and $v=u^{-1/p}$, then $v\in RH_\infty\subseteq A_{\infty}$, and $uv^p=1$, but for every $0<\varepsilon \leq 1$ such that $\varepsilon <p$, we have that $v^{-\varepsilon} = u^{\varepsilon/p}\in A_1$, and since $u$ is non-constant, $v^{-\varepsilon} \not \in RH_{\infty}$. Similarly, if we take a non-constant weight $v\in A_1$, and $u=v^{-p}$, then $uv^p=1$, and for every $\varepsilon>0$, $uv^{p-\varepsilon}=v^{-\varepsilon} \in RH_{\infty}\subseteq A_{\infty}$, but $u\in RH_{\infty}$ and is non-constant, so $u\not \in A_1$ (see Lemma \ref{pesos}).

The previous examples show that, sometimes, some of the hypotheses of Theorem~\ref{sparsemax} may be redundant. Let us be more precise on this fact. If $w\in A_{\infty}$, and $wv^{-\varepsilon}$ is a weight, then $[v^{-\varepsilon}]_{RH_\infty (w)}<\infty$ implies that $wv^{-\varepsilon}\in A_\infty$. Indeed, given a cube $Q\subseteq \mathbb R^n$, and a measurable set $E\subseteq Q$, we have that
$$
\frac{wv^{-\varepsilon}(E)}{wv^{-\varepsilon}(Q)}=\frac{1}{wv^{-\varepsilon}(Q)}\int_Q \chi_E wv^{-\varepsilon} \leq \frac{w(E)}{wv^{-\varepsilon}(Q)}\Vert \chi_Q v^{-\varepsilon}\Vert_{L^{\infty}(w)} \leq [v^{-\varepsilon}]_{RH_\infty (w)}\frac{w(E)}{w(Q)},
$$
and since $w\in A_\infty$, there exist $\delta, C>0$ such that
$$
\frac{w(E)}{w(Q)} \leq C \left(\frac{|E|}{|Q|} \right)^\delta,
$$
so
$$
\frac{wv^{-\varepsilon}(E)}{wv^{-\varepsilon}(Q)} \leq C [v^{-\varepsilon}]_{RH_\infty (w)} \left(\frac{|E|}{|Q|} \right)^\delta,
$$
and hence, $wv^{-\varepsilon}\in A_\infty$ (see \cite{DMRO}).

The next application of Theorem~\ref{sawyer} follows from the combination of Theorems~\ref{msawyer} and \ref{sparsemax}, and gives us mixed restricted weak type bounds for multi-variable sparse operators that can also be deduced for other operators, such as multi-linear Calder\'on-Zygmund operators, using sparse domination techniques (see \cite{li1}). 

\begin{theorem}\label{czo}
Let $1\leq p_1,\dots, p_m <\infty $, and let $\frac{1}{p}=\frac{1}{p_1}+ \dots + \frac{1}{p_m}$. Let $w_1, \dots,w_m$ be weights, with $w_i \in A_{p_i}^{\mathcal R}$, $i=1,\dots,m$, and write $\nu_{\vec w}=w_1^{p/p_1}\dots w_m ^{p/p_m}$. Let $v$ be a weight such that $\nu_{\vec w}v^p$ is a weight, and $w_iv^{p_i}\in A_\infty$, $i=1,\dots,m$. Moreover, suppose that there exists $0<\varepsilon \leq 1$ such that $\varepsilon < p$, $\nu_{\vec w}v^{p-\varepsilon} \in A_\infty$, and $[v^{-\varepsilon}]_{RH_\infty(\nu_{\vec w}v^p)}<\infty$. Then, there exists a constant $C>0$ such that the inequality
$$
\left \| \frac{T (\vec f)}{v} \right \|_{L^{p,\infty}(\nu_{\vec w}v^p)} \leq C \prod_{i=1}^m \left \| f_i \right \|_{L^{p_i,1}(w_i)}
$$ 
holds for every vector of measurable functions $\vec f=(f_1,\dots,f_m)$, where $T$ is either a sparse operator of the form
\begin{equation}\label{sparseopa}
\mathcal A_{\mathcal S} (\vec f):= \sum_{Q\in \mathcal S}  \left( \prod_{i=1} ^m\fint_Q f_i \right)\chi_Q,
\end{equation}
where $\mathcal S$ is an $\eta$-sparse collection of dyadic cubes, or any operator that can be conveniently dominated by such sparse operators, like $m$-linear $\omega$-Calder\'on-Zygmund operators with $\omega$ satisfying the Dini condition.
\end{theorem}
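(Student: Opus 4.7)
The plan is to deduce Theorem~\ref{czo} by chaining Theorem~\ref{sparsemax} and Theorem~\ref{msawyer}, and then transferring the bound from sparse operators to Calder\'on-Zygmund operators via sparse domination, without resorting to any $A_\infty$ extrapolation.

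First I would handle the case $T = \mathcal A_{\mathcal S}$ for an $\eta$-sparse collection $\mathcal S$. Setting $w := \nu_{\vec w}v^p$, one observes that $wv^{-\varepsilon} = \nu_{\vec w}v^{p-\varepsilon}$, so the assumptions on $\varepsilon$ in Theorem~\ref{czo} are \emph{exactly} the hypotheses of Theorem~\ref{sparsemax}. That theorem then yields
$$
\left\| \frac{\mathcal A_{\mathcal S}(\vec f)}{v} \right\|_{L^{p,\infty}(\nu_{\vec w}v^p)} \lesssim \left\| \frac{\mathcal M(\vec f)}{v} \right\|_{L^{p,\infty}(\nu_{\vec w}v^p)}.
$$
The weights $w_1,\dots,w_m$ and $v$ satisfy the hypotheses of Theorem~\ref{msawyer} ($w_i \in A_{p_i}^{\mathcal R}$ and $w_iv^{p_i}\in A_\infty$ for each $i$), so the right-hand side is bounded by $\prod_{i=1}^{m}\|f_i\|_{L^{p_i,1}(w_i)}$, closing the sparse case.

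For an $m$-linear $\omega$-Calder\'on-Zygmund operator $T$ with $\omega$ Dini, I would invoke the now-standard pointwise sparse domination (see \cite{li1} and references therein): for every $\vec f \in \mathscr C_c^\infty(\mathbb R^n)^m$ there exist finitely many general dyadic grids $\mathscr D_1,\dots,\mathscr D_N$ and $\eta$-sparse subcollections $\mathcal S_\alpha \subseteq \mathscr D_\alpha$ satisfying
$$
|T(\vec f)(x)| \leq C_{T,n,m} \sum_{\alpha=1}^{N} \mathcal A_{\mathcal S_\alpha}(|\vec f\,|)(x), \qquad x\in \mathbb R^n.
$$
Because the constants furnished by Theorems~\ref{sparsemax} and \ref{msawyer} are independent of the particular sparse family $\mathcal S_\alpha$, and since $L^{p,\infty}(\nu_{\vec w}v^p)$ satisfies a quasi-triangle inequality that absorbs any finite sum with a factor depending only on $p$ and $N$, this immediately reduces the bound for $T$ to the sparse-operator estimate already proved.

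The main obstacle I anticipate is bookkeeping rather than substance: one must check that the constants produced by Theorems~\ref{msawyer} and \ref{sparsemax} can be expressed in terms of the weight constants appearing in the statement (so that the final implicit constant is finite and tractable), and one must extend the inequality from Schwartz inputs to arbitrary $f_i \in L^{p_i,1}(w_i)$ by a standard density argument, using the quasi-Banach completeness of $L^{p,\infty}(\nu_{\vec w}v^p)$. Apart from these routine points and the appeal to the external sparse-domination machinery, the argument is essentially a two-line composition of the results already established in this section.
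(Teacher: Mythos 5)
Your proposal is correct and follows exactly the route the paper itself indicates: identifying $w=\nu_{\vec w}v^p$ so that the $\varepsilon$-hypotheses of Theorem~\ref{czo} coincide with those of Theorem~\ref{sparsemax}, chaining that with Theorem~\ref{msawyer} for the sparse case, and then passing to $m$-linear $\omega$-Calder\'on-Zygmund operators via the pointwise sparse domination of \cite{li1} together with a density argument. No gaps; this matches the paper's argument.
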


\begin{remark}
In the case when $p_1=\dots=p_m=1$, and $T$ is a multi-linear Calder\'on-Zygmund operator, the previous result follows from Theorem 1.9 in \cite{kob}.
\end{remark}

In general, there are examples of weights that satisfy the hypotheses of Theorem~\ref{czo} apart from the constant weights. For instance, if $1\leq p_1,\dots,p_m \leq m'$, we can take $w_i=(Mh_i)^{(1-p_i)/m}$, with $h_i \in L^1_{loc}(\mathbb R^n)$, $i=1,\dots,m$, and $v=\nu_{\vec w}^{-1/p}$. Indeed, in virtue of Theorem 2.7 in \cite{cgs}, we have that $w_i \in A_{p_i}^{\mathcal R}$, $i=1,\dots,m$, and $w_i v^{p_i}=\left( \prod_{j\neq i} (Mh_j)^{1/p_j '}\right)^{p_i/m}\in A_1$. Observe that $\nu_{\vec w}v^p=1$, and $v=\left(\prod_{i=1}^m (Mh_i)^{1/p_i '}\right)^{1/m} \in A_1$, so for every $\varepsilon>0$, $\nu_{\vec w}v^{p-\varepsilon}=v^{-\varepsilon} \in RH_{\infty} \subseteq A_{\infty}$.

The last application that we provide of Theorem~\ref{sawyer} can be interpreted as a dual version of it, and generalizes \cite[Proposition 2.10]{CS}.

\begin{theorem}\label{dualsawyer}
Fix $p > 1$, and let $u$ and $v$ be weights such that $u\in A_{p}^{\mathcal R}$, $uv^p\in A_{\infty}$, and for some $0<\varepsilon \leq 1$, $uv^{p-\varepsilon}$ is a weight and  $[v^{-\varepsilon}]_{RH_{\infty}(uv^p)}<\infty$. Then, there exists a constant $C>0$ such that for every measurable function $f$, 
\begin{equation}\label{eqdualsawyer}
    \left \Vert \frac{M(fuv^{p-1})}{u}\right \Vert_{L^{p',\infty}(u)} \leq C \Vert f \Vert_{L^{p',1}(uv^p)}.
\end{equation}
\end{theorem}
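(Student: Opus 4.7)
The plan is to reduce Theorem~\ref{dualsawyer} to Theorem~\ref{sawyer} by combining a sparse domination for the maximal operator with the self-adjointness of the sparse operator $\mathcal{A}_{\mathcal{S}}$ and Theorem~\ref{sparsemax}. Since $p'>1$, Kolmogorov's inequalities applied with $r=1$ give
$$\left\|\frac{M(fuv^{p-1})}{u}\right\|_{L^{p',\infty}(u)} \lesssim_{p} \sup_{0<u(E)<\infty} u(E)^{-1/p}\int_E M(fuv^{p-1})\,dx,$$
so it suffices to prove that $\int_E M(fuv^{p-1})\,dx \lesssim u(E)^{1/p}\|f\|_{L^{p',1}(uv^p)}$ for every measurable $E$ with $0<u(E)<\infty$.

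To estimate the integral, I first reduce $M$ to dyadic grids via $Mh \lesssim_n \sum_\alpha M^{\mathscr{D}_\alpha}h$ and then apply the standard sparse domination: for each $\alpha$ a Calder\'on--Zygmund stopping time applied to $h:=fuv^{p-1}$ produces an $\eta$-sparse collection $\mathcal{S}_\alpha\subseteq \mathscr{D}_\alpha$ (with $\eta$ depending only on $n$) such that $M^{\mathscr{D}_\alpha}h \leq c_n\,\mathcal{A}_{\mathcal{S}_\alpha}(h)$ pointwise. The key observation is the self-adjointness with respect to Lebesgue measure of the sparse operator \eqref{sparseopa} for $m=1$:
$$\int \mathcal{A}_{\mathcal{S}}(h)\,g\,dx = \sum_{Q\in\mathcal{S}}\left(\fint_Q h\right)\int_Q g\,dx = \int h\,\mathcal{A}_{\mathcal{S}}(g)\,dx,$$
which, applied with $g=\chi_E$, transfers the averaging from $h$ onto $\chi_E$ and yields
$$\int_E M(fuv^{p-1})\,dx \lesssim \sum_\alpha \int f\cdot \frac{\mathcal{A}_{\mathcal{S}_\alpha}(\chi_E)}{v}\,uv^p\,dx.$$

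I then apply H\"older's inequality for Lorentz spaces with exponents $(p',1)$ and $(p,\infty)$ against the measure $uv^p\,dx$ to obtain
$$\int f\cdot \frac{\mathcal{A}_{\mathcal{S}_\alpha}(\chi_E)}{v}\,uv^p\,dx \lesssim_p \|f\|_{L^{p',1}(uv^p)}\,\left\|\frac{\mathcal{A}_{\mathcal{S}_\alpha}(\chi_E)}{v}\right\|_{L^{p,\infty}(uv^p)}.$$
For the second factor, Theorem~\ref{sparsemax} applies in the one-variable case with weight $w=uv^p$ and with the $\varepsilon$ supplied by hypothesis, since $\varepsilon\leq 1<p$, and the discussion following Theorem~\ref{sparsemax} combined with $uv^p\in A_\infty$ and $[v^{-\varepsilon}]_{RH_\infty(uv^p)}<\infty$ automatically gives $uv^{p-\varepsilon}\in A_\infty$. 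Hence
$$\left\|\frac{\mathcal{A}_{\mathcal{S}_\alpha}(\chi_E)}{v}\right\|_{L^{p,\infty}(uv^p)} \lesssim \left\|\frac{M(\chi_E)}{v}\right\|_{L^{p,\infty}(uv^p)},$$
with constant independent of $\mathcal{S}_\alpha$, and Theorem~\ref{sawyer}, applicable since $u\in A_p^{\mathcal{R}}$ and $uv^p\in A_\infty$, controls the right-hand side by $\|\chi_E\|_{L^{p,1}(u)}=u(E)^{1/p}$.

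Chaining these estimates gives $\int_E M(fuv^{p-1})\,dx \lesssim u(E)^{1/p}\|f\|_{L^{p',1}(uv^p)}$, which combined with the Kolmogorov reduction finishes the proof. The only delicate point I anticipate is ensuring uniformity of constants across the $h$-dependent sparse families $\mathcal{S}_\alpha$; this is not an obstruction since both the pointwise sparse domination and Theorem~\ref{sparsemax} depend on $\mathcal{S}_\alpha$ only through the universal sparseness constant $\eta$, and the self-adjointness identity is exact. Everything else is routine bookkeeping of Lorentz quasi-norms.
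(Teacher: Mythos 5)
Your argument is correct and follows essentially the same route as the paper's proof: pointwise sparse domination of $M$, the exact self-adjointness of $\mathcal A_{\mathcal S_\alpha}$ to move the averaging onto the test object, H\"older's inequality in Lorentz spaces with respect to $uv^p\,dx$, and then Theorem~\ref{sparsemax} (using that $uv^p\in A_\infty$ together with $[v^{-\varepsilon}]_{RH_\infty(uv^p)}<\infty$ forces $uv^{p-\varepsilon}\in A_\infty$) followed by Theorem~\ref{sawyer}, with uniformity in the $f$-dependent sparse families coming only through the sparseness constant, exactly as you note. The only cosmetic difference is that you dualize the $L^{p',\infty}(u)$ quasi-norm via Kolmogorov's inequalities, i.e.\ by testing on characteristic functions $\chi_E$, whereas the paper tests against general $g$ with $\Vert g\Vert_{L^{p,1}(u)}\leq 1$ using the $L^{p,1}$--$L^{p',\infty}$ duality; both reductions are equivalent here and lead to the same chain of estimates.
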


\begin{proof}
It is known (see \cite{lerner}) that there exist a collection $\{\mathscr D_{\alpha}\}_{\alpha}$ of $2^n$ general dyadic grids in $\mathbb R^n$, and a collection $\{\mathcal S_ {\alpha} \}_{\alpha}$ of $\frac{1}{2}$-sparse families of cubes, with $\mathcal S_ {\alpha} \subseteq \mathscr D_{\alpha}$, such that for every measurable function $F$,
$$
MF \leq 2 \cdot 12^n \sum_{\alpha = 1}^{2^n} \mathcal A_{\mathcal S_{\alpha}} (|F|).
$$
Hence,
$$
\left \Vert \frac{M(fuv^{p-1})}{u}\right \Vert_{L^{p',\infty}(u)} \leq 2 \cdot 24^n \sum_{\alpha =1}^{2^n} \left \Vert \frac{\mathcal A_{\mathcal S_{\alpha}}(|f|uv^{p-1})}{u}\right \Vert_{L^{p',\infty}(u)}.
$$

By duality, and self-adjointness of $\mathcal A_{\mathcal S_{\alpha}}$, and in virtue of H\"older's inequality, we have that
\begin{align*}
    \left \Vert \frac{\mathcal A_{\mathcal S_{\alpha}}(|f|uv^{p-1})}{u}\right \Vert_{L^{p',\infty}(u)} & \leq p \sup_{\Vert g \Vert_{L^{p,1}(u)}\leq 1} \left \{\int_{\mathbb R^n} \mathcal A_{\mathcal S_{\alpha}}(|f|uv^{p-1}) |g| \right \} \\ & = p \sup_{\Vert g \Vert_{L^{p,1}(u)}\leq 1} \left \{\int_{\mathbb R^n} |f|uv^{p-1} \mathcal A_{\mathcal S_{\alpha}}(|g|) \right \} \\ & \leq p \sup_{\Vert g \Vert_{L^{p,1}(u)}\leq 1} \left \{\left \Vert \frac{\mathcal A_{\mathcal S_{\alpha}}(|g|)}{v} \right \Vert_{L^{p,\infty}(uv^p)} \right \} \Vert f \Vert_{L^{p',1}(uv^p)},
\end{align*}
and the desired result follows from Theorem~\ref{sparsemax} and Theorem~\ref{sawyer}.
\end{proof}

\begin{remark}
It is clear from the previous proof that Theorem~\ref{dualsawyer} is also true for operators that can be conveniently dominated by sparse operators $A_{\mathcal S_{\alpha}}$. Even more, for a self-adjoint operator $T$, and by duality, an inequality like \eqref{eqdualsawyer} follows immediately from an inequality like \eqref{eqsawyer}, with $T$ in place of $M$.
\end{remark}

Note that for $p>1$, if $u\in A_p$, and $v$ is a weight, then for every measurable function $f$,
\begin{align*}
    \left \Vert \frac{M(fuv^{p-1})}{u}\right \Vert_{L^{p',\infty}(u)} & \leq \left \Vert \frac{M(fuv^{p-1})}{u}\right \Vert_{L^{p'}(u)} = \Vert M(fuv^{p-1}) \Vert_{L^{p'}(u^{1-p'})} \\ & \lesssim  [u]_{A_p} \Vert f\Vert_{L^{p'}(uv^{p})} \lesssim [u]_{A_p} \Vert f\Vert_{L^{p',1}(uv^{p})}.
\end{align*}
Hence, we obtain the conclusion of Theorem~\ref{dualsawyer} without assuming that for some $0<\varepsilon \leq 1$, $[v^{-\varepsilon}]_{RH_{\infty}(uv^p)}<\infty$. We would like to prove Theorem~\ref{dualsawyer} without this technical hypothesis, but unfortunately, at the time of writing, we don't know how to do it.

Observe that if $v=1$, then in Theorem~\ref{dualsawyer} we can take $\varepsilon = 1$, and
$
C= C_{n,p} [u]_{A_p^{\mathcal R}}^{p+1},
$
and the dependence on $u$ of the constant $C$ is explicit, although the exponent $p+1$ may not be sharp. Also, by testing on characteristic functions and using Kolmogorov's inequalities, we see that the condition that $u\in A_p^{\mathcal R}$ is necessary. This argument yields a new characterization of $A_p^{\mathcal R}$ weights and refines \cite[Proposition 2.10]{CS}.

\begin{theorem}
Fix $p > 1$, and let $u$ be a weight. If $u\in A_p^{\mathcal R}$, then for every measurable function $f$, 
\begin{equation*}
    \left \Vert \frac{M(fu)}{u}\right \Vert_{L^{p',\infty}(u)} \lesssim_{n,p} [u]_{A_p^{\mathcal R}}^{p+1} \Vert f \Vert_{L^{p',1}(u)}.
\end{equation*}
Moreover, if such an inequality holds for some constant $C>0$, then $u\in A_p^{\mathcal R}$, and $[u]_{A_p^{\mathcal R}}\leq p' C$.
\end{theorem}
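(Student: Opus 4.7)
The strategy is to obtain the direct implication as the specialization $v\equiv 1$ of Theorem~\ref{dualsawyer}, while tracking constants, and to prove the converse by the classical testing argument on characteristic functions of cubes.

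For the direct implication, I would set $v\equiv 1$ and $\varepsilon=1$ in Theorem~\ref{dualsawyer}. Since $p>1$ one has $\varepsilon=1<p$, the weight $uv^p=u$ belongs to $A_\infty$ because $A_p^{\mathcal R}\subseteq A_\infty$, and $[v^{-\varepsilon}]_{RH_\infty(u)}$ trivially equals $1$, so every hypothesis is satisfied. To pin down the constant I would revisit the chain of estimates in the proof of Theorem~\ref{dualsawyer}: after sparse domination, duality together with the self-adjointness of $\mathcal A_{\mathcal S}$ and the Lorentz Hölder inequality reduce the task to controlling $\Vert \mathcal A_{\mathcal S}(|g|)\Vert_{L^{p,\infty}(u)}$ by a multiple of $\Vert g\Vert_{L^{p,1}(u)}$. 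Theorem~\ref{sparsemax} with $v\equiv 1$, $w=u$, $\varepsilon=1$ and $r=p$ yields $\Vert \mathcal A_{\mathcal S}(|g|)\Vert_{L^{p,\infty}(u)}\lesssim_{n,p}[u]_{A_p^{\mathcal R}}^{p}\Vert Mg\Vert_{L^{p,\infty}(u)}$, and the classical restricted weak type bound $\Vert M\Vert_{L^{p,1}(u)\to L^{p,\infty}(u)}\approx [u]_{A_p^{\mathcal R}}$ recorded in the Preliminaries contributes one further factor of $[u]_{A_p^{\mathcal R}}$. Multiplying these factors produces the announced dependence $[u]_{A_p^{\mathcal R}}^{p+1}$.

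For the converse, I would test the hypothesized inequality on $f=\chi_Q$ for an arbitrary cube $Q$. The decreasing rearrangement of $\chi_Q$ with respect to $u$ is $\chi_{[0,u(Q)]}$, so one directly computes $\Vert \chi_Q\Vert_{L^{p',1}(u)}=p'\,u(Q)^{1/p'}$. On the other hand, the pointwise lower bound $M(\chi_Q u)(x)\geq u(Q)/|Q|$ valid for every $x\in Q$ gives
\[
\left\Vert \frac{M(\chi_Q u)}{u}\right\Vert_{L^{p',\infty}(u)}\geq \frac{u(Q)}{|Q|}\bigl\Vert \chi_Q u^{-1}\bigr\Vert_{L^{p',\infty}(u)}.
\]
Plugging both computations into the assumed inequality produces
\[
\frac{u(Q)^{1/p}\,\Vert \chi_Q u^{-1}\Vert_{L^{p',\infty}(u)}}{|Q|}\leq p'\,C,
\]
and taking the supremum over all cubes $Q$ delivers $[u]_{A_p^{\mathcal R}}\leq p'C$.

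I don't foresee any genuine obstacle. The only step requiring careful bookkeeping is making sure that, when specialized to $v\equiv 1$, the argument of Theorem~\ref{dualsawyer} gives exactly the exponent $p+1$ instead of the coarser bound one would obtain by invoking Lemma~\ref{creciente} directly; this cleanness is due to the fact that Theorem~\ref{sawyer} reduces in the case $v\equiv 1$ to the sharp classical restricted weak type bound for $M$, contributing a single power of $[u]_{A_p^{\mathcal R}}$ on top of the $p$ powers produced by the sparse step.
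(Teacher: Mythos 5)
Your proposal is correct and follows essentially the same route as the paper: the direct part is exactly the paper's specialization of Theorem~\ref{dualsawyer} to $v\equiv 1$, $\varepsilon=1$ (where the sparse step via Theorem~\ref{sparsemax} with $r=p$ gives $[u]_{A_p^{\mathcal R}}^{p}$ and the restricted weak type bound for $M$ gives the extra factor $[u]_{A_p^{\mathcal R}}$), and the converse is the paper's testing argument on characteristic functions, which you carry out explicitly with $\Vert\chi_Q\Vert_{L^{p',1}(u)}=p'\,u(Q)^{1/p'}$ and $M(\chi_Q u)\geq \frac{u(Q)}{|Q|}$ on $Q$ to get $[u]_{A_p^{\mathcal R}}\leq p'C$.
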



\section{Sawyer-type inequalities and multi-variable conditions on weights} \label{section5}

In \cite[Theorem 1.5]{kob}, Li, Ombrosi, and Picardi obtained an endpoint Sawyer-type inequality for the operator $\mathcal M$ involving $A_{\vec 1}$ weights. It is natural to ask if something similar can be done in the general restricted weak setting, establishing a result for $\mathcal M$ like the one in Theorem~\ref{msawyer}, but assuming a multi-variable condition on the tuple of weights involved instead of imposing an individual condition on each weight. In this section we study this question.

In view of Theorem~\ref{msawyer} for $v=1$, it is reasonable to begin with the characterization of the weights for which the operator $\mathcal M$ and its centered version $\mathcal M^c$ are bounded in the restricted weak setting. This will give us the appropriate multi-variable condition on the weights. We use ideas from \cite[Section 3]{CHK} and \cite[Theorem 7.1.9]{grafclas}.

\begin{theorem}\label{frac}
Let $1\leq p_1,\dots, p_m <\infty $, and $\frac{1}{p}=\frac{1}{p_1} + \dots + \frac{1}{p_m}$. Let $w_1, \dots,w_m$, and $\nu$ be weights. The inequality 
\begin{equation}\label{eqfrach}
\Vert \mathcal M (\vec f) \Vert_{L^{p,\infty}(\nu)} \leq C \prod_{i=1}^m \Vert f_i \Vert_{L^{p_i,1}(w_i)} 
\end{equation} 
holds for every vector of measurable functions $\vec f$ if, and only if
\begin{equation}\label{eqfracc}
[\vec w, \nu]_{A_{\vec P}^{\mathcal R}} :=  \sup_{Q} \nu(Q)^{1/p} \prod_{i=1} ^m \frac{\Vert \chi_Q w_i ^{-1} \Vert_{L^{p_i',\infty}(w_i)}}{|Q|} < \infty.
\end{equation}
\end{theorem}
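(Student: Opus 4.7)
The plan is to establish the two implications separately, in the spirit of \cite[Section 3]{CHK} and \cite[Theorem 7.1.9]{grafclas}.

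For necessity, I would test \eqref{eqfrach} on tuples of characteristic functions $\vec f=(\chi_{E_1},\dots,\chi_{E_m})$ with $E_i\subseteq Q$ for a fixed cube $Q$. Since $\mathcal M(\vec f)(x)\ge\prod_i |E_i|/|Q|$ for every $x\in Q$, the hypothesis yields
$$
\nu(Q)^{1/p}\prod_{i=1}^m\frac{|E_i|}{|Q|}\le C\prod_{i=1}^m w_i(E_i)^{1/p_i}.
$$
Taking the supremum over $E_i\subseteq Q$ independently and invoking the Kolmogorov-type identity $\|\chi_Q w^{-1}\|_{L^{p',\infty}(w)}\approx \sup_{E\subseteq Q}|E|/w(E)^{1/p}$, which follows from the Kolmogorov inequalities recalled in the preliminaries, one obtains \eqref{eqfracc}.

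For sufficiency, assume \eqref{eqfracc}. I first prove \eqref{eqfrach} when each $f_i=\chi_{E_i}$. Fix $t>0$ and set $\Omega_t:=\{\mathcal M(\chi_{E_1},\dots,\chi_{E_m})>t\}$. A Vitali covering argument applied to the defining cubes of $\mathcal M$ produces a pairwise disjoint family $\{Q_j\}$ with $\Omega_t\subseteq\bigcup_j 3Q_j$ and $\prod_i|E_i\cap Q_j|/|Q_j|>t$ for every $j$. Applying \eqref{eqfracc} on $3Q_j$ and using the Kolmogorov identity in the form $\|\chi_{Q_j}w_i^{-1}\|_{L^{p_i',\infty}(w_i)}\gtrsim |E_i\cap Q_j|/w_i(E_i\cap Q_j)^{1/p_i}$, a short manipulation leveraging $\prod_i|E_i\cap Q_j|>t|Q_j|^m$ gives
$$
\nu(3Q_j)^{1/p}\lesssim_n \frac{[\vec w,\nu]_{A_{\vec P}^{\mathcal R}}}{t}\prod_{i=1}^m w_i(E_i\cap Q_j)^{1/p_i}.
$$
Raising to the $p$-th power, summing in $j$, and applying H\"older's inequality with exponents $p_i/p$ (whose reciprocals add up to $1$ by $\sum 1/p_i=1/p$) together with the pairwise disjointness of $\{E_i\cap Q_j\}_j$, I recover \eqref{eqfrach} for characteristic functions.

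To pass to general $\vec f$, I would use the sub-multi-linearity and positivity of $\mathcal M$ together with the layer-cake formula $f_i=\int_0^\infty\chi_{\{f_i>s_i\}}\,ds_i$ to bound
$$
\mathcal M(\vec f)\le \int_{(0,\infty)^m}\mathcal M(\chi_{\{f_1>s_1\}},\dots,\chi_{\{f_m>s_m\}})\,d\vec s,
$$
then apply the characteristic function case and the identity $\int_0^\infty w_i(\{f_i>s\})^{1/p_i}\,ds=p_i^{-1}\|f_i\|_{L^{p_i,1}(w_i)}$. The main obstacle is passing the $L^{p,\infty}(\nu)$ quasi-norm inside the integral when $p\le 1$, which may occur here since $1/p=\sum 1/p_i$ can exceed $1$; in that regime $L^{p,\infty}(\nu)$ is not locally convex and Minkowski's integral inequality is not directly available. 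I expect to circumvent this by discretizing the layer cake into dyadic level sets and working with the equivalent functional $\vertiii{\cdot}_{L^{p,\infty}(\nu)}$ from the preliminaries at an auxiliary exponent $r<p$, following the discretization technique of \cite[Theorem 7.1.9]{grafclas}.
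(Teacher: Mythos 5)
Your necessity argument is fine (you test on indicators and invoke the equivalence $\Vert \chi_Q w_i^{-1}\Vert_{L^{p_i',\infty}(w_i)}\approx\sup_{E\subseteq Q}|E|\,w_i(E)^{-1/p_i}$, which is exactly \cite[Lemma 2.8]{CHK} plus Kolmogorov; the paper instead picks near-extremal $f_i$ by duality of $L^{p_i,1}(w_i)$ — both routes work), and your covering estimate for characteristic functions is also sound. The genuine gap is the final step, the passage from indicators to general $\vec f$ when $p\le 1$ (the typical multilinear range, e.g.\ $p=1/m$ when all $p_i=1$). For $p>1$ your layer-cake plus Minkowski argument closes, but for $p\le1$ the substitute you propose does not: discretizing $f_i\approx\sum_k 2^k\chi_{E_{i,k}}$ and using the functional $\vertiii{\cdot}_{L^{p,\infty}(\nu)}$ at an exponent $r<p\,(\le 1)$ only gives $r$-subadditivity, i.e.
$$
\vertiii{\mathcal M(\vec f)}_{L^{p,\infty}(\nu)}^r\lesssim\sum_{\vec k}2^{r(k_1+\dots+k_m)}\vertiii{\mathcal M(\chi_{E_{1,k_1}},\dots,\chi_{E_{m,k_m}})}_{L^{p,\infty}(\nu)}^r\lesssim C^r\prod_{i=1}^m\sum_{k_i}\bigl(2^{k_i}w_i(E_{i,k_i})^{1/p_i}\bigr)^r,
$$
and since $\sum_k a_k^r\ge\bigl(\sum_k a_k\bigr)^r$ for $r<1$, the right-hand side is controlled by $\prod_i\Vert f_i\Vert_{L^{p_i,r}(w_i)}^r$ but \emph{not} by $\prod_i\Vert f_i\Vert_{L^{p_i,1}(w_i)}^r$; you would only prove boundedness on the strictly smaller spaces $L^{p_i,r}(w_i)$. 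There is no abstract self-improvement from a restricted estimate to an $L^{p_i,1}$ bound when the target is merely quasi-Banach, so this step needs a different idea, not just a more careful discretization.

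The paper avoids the issue by never reducing to characteristic functions. In the covering argument (for compact subsets of the level set of the centered operator, using the bounded-overlap selection of \cite[Lemma 7.1.10]{grafclas}; your Vitali variant would serve equally well) each average is estimated directly for general $f_i$ by H\"older's inequality for Lorentz spaces, $\fint_{Q_j}|f_i|\le|Q_j|^{-1}\Vert f_i\chi_{Q_j}\Vert_{L^{p_i,1}(w_i)}\Vert\chi_{Q_j}w_i^{-1}\Vert_{L^{p_i',\infty}(w_i)}$, then the condition \eqref{eqfracc} is applied, and the resulting sum over the cubes is decoupled by discrete H\"older with exponents $p_i/p$ and closed with \cite[Lemma 2.5]{CHK}, which gives $\sum_j\Vert f_i\chi_{Q_j}\Vert_{L^{p_i,1}(w_i)}^{p_i}\lesssim\Vert f_i\Vert_{L^{p_i,1}(w_i)}^{p_i}$ for a bounded-overlap family (a Minkowski-type inequality, available because $p_i\ge1$). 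If you replace your indicator reduction by this Lorentz–H\"older step, your covering scheme yields the full sufficiency statement; as written, the quasi-Banach case is not proved.
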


\begin{proof}
First, recall that by \cite[Theorem 1.4.16.(v)]{grafclas} (see also \cite[Theorem 4.4]{barza}), we have that
\begin{equation*}
\frac{1}{p_i}\left \| g \right \|_{L^{p'_i,\infty}(w_i)} \leq \sup \left \{\int_{\mathbb R^n} |fg|w_i : \left \| f \right \|_{L^{p_i,1}(w_i)} \leq 1 \right\} \leq \left \| g \right \|_{L^{p'_i,\infty}(w_i)}.
\end{equation*}

Now, fix a cube $Q$, and $\gamma > 1$, and for $i=1,\dots,m$, choose a non-negative function $f_i$ such that $\left \| f_i \right \|_{L^{p_i,1}(w_i)} \leq 1$ and
\begin{equation}\label{eqfrac1}
\int _Q f_i = \int_{\mathbb R^n} f_i (\chi_Q w_i^{-1}) w_i \geq \frac{1}{\gamma p_i} \Vert \chi_Q w_i ^{-1} \Vert_{L^{p'_i,\infty}(w_i)}.
\end{equation}
Since 
\begin{equation*} \left( \prod_{i=1}^m \frac{1}{|Q|}\int_Q |f_i|\right)\chi_Q\leq \mathcal M (\vec f) ,\end{equation*}
the hypothesis (\ref{eqfrach}) and (\ref{eqfrac1}) imply that
\begin{equation*}
\nu(Q)^{1/p} \prod_{i=1}^m \frac{\Vert \chi_Q w_i ^{-1} \Vert_{L^{p'_i,\infty}(w_i)}}{|Q|} \leq \gamma^m p_1 \dots p_m C,
\end{equation*}
and hence, $[\vec w, \nu]_{A_{\vec P}^{\mathcal R}} \leq p_1 \dots p_m C<\infty$. 

For the converse, suppose that the quantity $[\vec w, \nu]_{A_{\vec P}^{\mathcal R}}  < \infty$. Observe that
\begin{equation*}\mathcal{M} ^c (\vec f) \leq \mathcal M (\vec f) \leq 2^{nm} \mathcal{M}^c (\vec f),\end{equation*} 
so it suffices to establish the result for the operator $\mathcal{M}^c$. 

If for some $i=1,\dots,m$, $\left \| f_i \right \|_{L^{p_i,1}(w_i)}=\infty$, then there is nothing to prove, so we may assume that $\left \| f_i \right \|_{L^{p_i,1}(w_i)}<\infty$ for every $i=1,\dots,m$. 
Fix $\lambda >0$, and let $E_{\lambda} := \{x\in \mathbb R^n : \mathcal {M}^c (\vec f)(x) > \lambda\}$. We first show that this set is open. If for some $i=1,\dots,m$, $f_i\not \in L^1_{loc}(\mathbb R^n)$, then $E_{\lambda} = \mathbb R^n$. Otherwise, observe that for any $r>0$, and $x\in \mathbb R^n$, the function
\begin{equation*}
x \longmapsto \prod_{i=1} ^m \frac{1}{|Q(x,r)|}\int_{Q(x,r)}|f_i|
\end{equation*}
is continuous. Indeed, if $x_n \rightarrow x_0$, then $|Q(x_n,r)| \rightarrow |Q(x_0,r)|$, and also $\int_{Q(x_n,r)}|f_i| \rightarrow \int_{Q(x_0,r)}|f_i|$ by Lebesgue's dominated convergence theorem. Since $ |Q(x_0,r)| \neq 0$, the result follows. This implies that $\mathcal{M}^c (\vec f)$ is the supremum of continuous functions and hence, it is lower semi-continuous, and the set $E_{\lambda}$ is open. 

Given a compact subset $K$ of $E_{\lambda}$, for any $x \in K$, select an open cube $Q_x$ centered at $x$ such that
\begin{equation*}
\prod_{i=1}^m \frac{1}{|Q_x|}\int_{Q_x}|f_i| > \lambda. 
\end{equation*}
In virtue of \cite[Lemma 7.1.10]{grafclas}\footnote{Corrected in \url{https://faculty.missouri.edu/~grafakosl/Books_Correction_Pages/CFA-Page509-511.pdf}.}, we find a subset $\{Q_j\}_{j=1}^N$ of $\{Q_x:x\in K\}$ such that $K \subseteq \bigcup_{j=1}^N Q_j$, and $\sum_{j=1}^N \chi_{Q_j}\leq 72^n$. Then, by H\"older's inequality for Lorentz spaces, (\ref{eqfracc}), discrete H\"older's inequality with exponents $\frac{p_i}{p}$, and \cite[Lemma 2.5]{CHK},
\begin{align*}
\nu(K) & \leq \sum_{j=1}^N \nu(Q_j) \leq \frac{1}{\lambda ^p} \sum_{j=1}^N \nu(Q_j) \left(\prod_{i=1}^m \frac{1}{|Q_j|} \int_{Q_j}|f_i| \right)^p \\ & \leq \frac{1}{\lambda ^p} \sum_{j=1}^N \nu(Q_j)  \prod_{i=1}^m |Q_j|^{-p} \Vert f_i \chi_{Q_j}\Vert_{L^{p_i,1}(w_i)}^p \Vert \chi_{Q_j} w_i ^{-1} \Vert_{L^{p'_i,\infty}(w_i)} ^p \\ & \leq \frac{[\vec w, \nu]_{A_{\vec P}^{\mathcal R}}^p}{\lambda ^p} \sum_{j=1}^N  \prod_{i=1}^m \Vert f_i \chi_{Q_j}\Vert_{L^{p_i,1}(w_i)}^p \\ & \leq \frac{[\vec w, \nu]_{A_{\vec P}^{\mathcal R}}^p}{\lambda ^p} \prod_{i=1}^m \left ( \sum_{j=1}^N   \Vert f_i \chi_{Q_j}\Vert_{L^{p_i,1}(w_i)}^{p_i} \right) ^{p/p_i}\\ & \leq 72^{n} \frac{[\vec w, \nu]_{A_{\vec P}^{\mathcal R}}^p}{\lambda ^p} \prod_{i=1}^m \Vert f_i \Vert_{L^{p_i,1}(w_i)}^p.
\end{align*} 
Taking the supremum over all compact subsets $K$ of $E_{\lambda}$ and using the inner regularity of $\nu(x)dx$, 
we obtain (\ref{eqfrach}) with constant 
\begin{equation*}C=2^{nm}72^{n/p}[\vec w, \nu]_{A_{\vec P}^{\mathcal R}}.\end{equation*}
\end{proof}

\begin{remark}\label{normbounds}
In fact, we have proved that 
\begin{equation*}
\frac{2^{-nm}}{\prod_{i=1}^m p_i} [\vec w, \nu]_{A_{\vec P}^{\mathcal R}} \leq \Vert \mathcal M^c \Vert_{\prod_{i=1}^m L^{p_i,1}(w_i)\rightarrow L^{p,\infty}(\nu)} \leq 72^{n/p}[\vec w, \nu]_{A_{\vec P}^{\mathcal R}}
\end{equation*}
and
\begin{equation*}
\frac{1}{\prod_{i=1}^m p_i} [\vec w, \nu]_{A_{\vec P}^{\mathcal R}} \leq \Vert \mathcal M \Vert_{\prod_{i=1}^m L^{p_i,1}(w_i)\rightarrow L^{p,\infty}(\nu)} \leq 2^{nm}72^{n/p}[\vec w, \nu]_{A_{\vec P}^{\mathcal R}}.
\end{equation*}
\end{remark}

\begin{remark}
Observe that if $\mathcal M$ is bounded as in (\ref{eqfrach}), then for every cube $Q$, if we choose $f_1=\dots=f_m = \chi_Q$, we get that
\begin{equation*}
\left( \fint_Q \nu \right)^{1/p} \leq p_1 \dots p_m C  \prod_{i=1}^m \left( \fint_Q w_i \right)^{1/p_i},
\end{equation*}
and Lebesgue's differentiation theorem implies that $\nu \lesssim \prod_{i=1}^m w_i^{p/p_i}$.
\end{remark}

In virtue of Theorem~\ref{frac}, we define the following class of weights. 

\begin{definition}
Let $1\leq p_1,\dots, p_m <\infty $, and $\frac{1}{p}=\frac{1}{p_1} + \dots + \frac{1}{p_m}$. Let $w_1, \dots,w_m$, and $\nu$ be weights. We say that $(w_1,\dots,w_m,\nu)$ belongs to the class $A_{\vec P}^{\mathcal R}$ if $[\vec w, \nu]_{A_{\vec P}^{\mathcal R}} < \infty$.  
\end{definition} 

The condition that defines the class of $A_{\vec P}^{\mathcal R}$ weights depends on their behavior on cubes, and has been obtained following the ideas of Chung, Hunt, and Kurtz (see \cite{CHK}). One can ask if it is possible to obtain a different condition, resembling the one obtained by Kerman and Torchinsky (see \cite{kt}). Our next theorem gives a positive answer to this question, recovering their results in the case when $m=1$ and $w_1=\nu$.  

\begin{theorem}\label{multiaprweights}
Let $1\leq p_1,\dots, p_m <\infty $, and $\frac{1}{p}=\frac{1}{p_1} + \dots + \frac{1}{p_m}$. Let $w_1, \dots,$ $w_m,$ and $\nu$ be weights. The following statements are equivalent:
\begin{enumerate}
\item[$(a)$] $\Vert \mathcal M (\vec f) \Vert_{L^{p,\infty}(\nu)} \leq C \prod_{i=1}^m \left \| f_i \right \|_{L^{p_i,1}(w_i)}$, for every $\vec f$.
\item[$(b)$] $\left \| \mathcal M (\vec \chi) \right \|_{L^{p,\infty}(\nu)} \leq c \prod_{i=1}^m w_i(E_i)^{1/p_i}$, for every $\vec \chi = (\chi_{E_1},\dots,\chi_{E_m})$.
\item[$(c)$] \begin{equation*}\Vert \vec w, \nu \Vert_{A_{\vec P}^{\mathcal R}} :=  \sup_Q \nu(Q)^{1/p} \prod_{i=1}^m \sup_{0<w_i(E_i)<\infty} \frac{|E_i\cap Q|}{|Q|}w_i(E_i)^{-1/p_i}<\infty.\end{equation*}
\item[$(d)$] $(w_1,\dots,w_m,\nu)\in A_{\vec P}^{\mathcal R}$.
\end{enumerate}
Moreover, if $(w_1,\dots,w_m,\nu)\in A_{\vec P}^{\mathcal R}$, and $\nu \in A_\infty$, then 
\begin{equation}\label{eqmultiaprweightsc}
T:L^{p_1,1}(w_1)\times \dots \times L^{p_m,1}(w_m) \longrightarrow L^{p,\infty}(\nu),
\end{equation}
where $T$ is either a sparse operator as in \eqref{sparseopa},
or any operator that can be conveniently dominated by such sparse operators, like $m$-linear $\omega$-Calder\'on-Zygmund operators with $\omega$ satisfying the Dini condition.
\end{theorem}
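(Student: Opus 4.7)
The plan is to establish the cycle $(a) \Rightarrow (b) \Rightarrow (c) \Leftrightarrow (d) \Rightarrow (a)$, and then deduce the moreover part from Theorem~\ref{sparsemax} combined with $(a)$. The implication $(d) \Rightarrow (a)$ is exactly Theorem~\ref{frac}, so nothing new is required there.

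For $(a) \Rightarrow (b)$ it suffices to test the hypothesis on $f_i = \chi_{E_i}$ and use the identity $\|\chi_{E_i}\|_{L^{p_i,1}(w_i)} = w_i(E_i)^{1/p_i}$. For $(b) \Rightarrow (c)$, fix a cube $Q$ and observe that pointwise
\begin{equation*}
\mathcal M(\chi_{E_1},\dots,\chi_{E_m})(x) \geq \prod_{i=1}^m \frac{|E_i \cap Q|}{|Q|}\, \chi_Q(x),
\end{equation*}
so that $\|\mathcal M(\vec{\chi})\|_{L^{p,\infty}(\nu)} \geq \nu(Q)^{1/p} \prod_i |E_i \cap Q|/|Q|$. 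Dividing by $\prod_i w_i(E_i)^{1/p_i}$ and taking the supremum over cubes $Q$ and sets $E_i$ yields $\|\vec w,\nu\|_{A_{\vec P}^{\mathcal R}} \leq c$.

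The main technical point is $(c) \Leftrightarrow (d)$, which reduces to the single-variable equivalence
\begin{equation*}
\|\chi_Q w_i^{-1}\|_{L^{p_i',\infty}(w_i)} \approx \sup_{0 < w_i(E_i) < \infty} \frac{|E_i \cap Q|}{w_i(E_i)^{1/p_i}},
\end{equation*}
which I would prove by applying Kolmogorov's inequalities to $g := \chi_Q w_i^{-1}$ with $r = 1 < p_i'$ (so this argument is valid in the range $p_i > 1$), noting that $\int_F g\, w_i = |F \cap Q|$, and observing that replacing $F$ with $F \cap Q$ only improves the quotient, so the supremum over $F$ coincides with the supremum over subsets of $Q$. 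The endpoint case $p_i = 1$ must be handled separately: there $\|\chi_Q w_i^{-1}\|_{L^{\infty}(w_i)} = (\essinf_Q w_i)^{-1}$, and an elementary argument using the sets $\{x \in Q : w_i(x) < \essinf_Q w_i + \varepsilon\}$ shows that this equals $\sup_E |E \cap Q|/w_i(E)$. Multiplying the $m$ equivalences and multiplying by $\nu(Q)^{1/p}/|Q|^m$ gives $[\vec w,\nu]_{A_{\vec P}^{\mathcal R}} \approx \|\vec w,\nu\|_{A_{\vec P}^{\mathcal R}}$, so $(c) \Leftrightarrow (d)$. I expect this step to be the most delicate, since Kolmogorov only yields a two-sided equivalence with constants depending on $p_i$, and care is needed at $p_i = 1$.

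For the moreover part, given $\nu \in A_\infty$, apply Theorem~\ref{sparsemax} with $v \equiv 1$, $w = \nu$, and any $\varepsilon \in (0,1]$ with $\varepsilon < p$ (so that the redundant conditions $wv^{-\varepsilon} = \nu \in A_\infty$ and $[v^{-\varepsilon}]_{RH_\infty(w)} = 1$ are trivially satisfied), yielding
\begin{equation*}
\|\mathcal A_{\mathcal S}(\vec f)\|_{L^{p,\infty}(\nu)} \lesssim \|\mathcal M(\vec f)\|_{L^{p,\infty}(\nu)}.
\end{equation*}
Combining with $(a)$ gives the boundedness for sparse operators. For operators dominated pointwise (or in norm) by finitely many such sparse operators, such as $m$-linear $\omega$-Calder\'on-Zygmund operators with $\omega$ satisfying the Dini condition, invoke the standard sparse domination theorems (as in \cite{li1}) to transfer the estimate to $T$, concluding \eqref{eqmultiaprweightsc}.
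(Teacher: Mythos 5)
Your proposal is correct and follows essentially the same route as the paper: the same cycle of implications, the same pointwise testing of $\mathcal M$ on a cube for $(b)\Rightarrow(c)$, and the same Kolmogorov-based identification of $\Vert \chi_Q w_i^{-1}\Vert_{L^{p_i',\infty}(w_i)}$ with $\sup_{E\subseteq Q}|E|\,w_i(E)^{-1/p_i}$ for $(c)\Leftrightarrow(d)$ (the paper quotes Lemma 2.8 of \cite{CHK} together with Kolmogorov's inequalities, which is exactly your computation, including the separate treatment of $p_i=1$), while $(d)\Rightarrow(a)$ is Theorem~\ref{frac} in both. The only cosmetic difference is in the ``moreover'' part, where you apply Theorem~\ref{sparsemax} as a black box with $v\equiv 1$, $w=\nu$ and conclude via $(d)\Rightarrow(a)$, whereas the paper reruns that argument for the dyadic sparse maximal operator $\mathcal M_{\mathcal S}$ to record a two-sided norm comparison; for the boundedness claimed in \eqref{eqmultiaprweightsc} your shorter route (plus the standard density argument for the Calder\'on--Zygmund case, which you only implicitly invoke) is perfectly sufficient.
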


\begin{proof}
It is clear that (a) implies (b), and we have already proved in Theorem~\ref{frac} that (a) and (d) are equivalent. Let us show that (b) implies (c). Fix a cube $Q$ and measurable sets $E_i$, for $i=1,\dots, m$, with $0<w_i(E_i)<\infty$. Since
\begin{equation*}
\left(\prod_{i=1} ^m \frac{|E_i\cap Q|}{|Q|} \right) \chi_Q \leq \mathcal M (\vec \chi), 
\end{equation*}
we apply (b) to conclude that
\begin{equation*}
\nu(Q)^{1/p} \prod_{i=1} ^m \frac{|E_i\cap Q|}{|Q|} \leq c \prod_{i=1}^m w_i(E_i)^{1/p_i},
\end{equation*}
and hence, $\Vert \vec w, \nu \Vert_{A_{\vec P}^{\mathcal R}} \leq c < \infty$. 

We now prove that (c) is equivalent to (d). First, observe that for every $i=1,\dots,m$,
\begin{equation*}
\sup_{0<w_i(E_i)<\infty} \frac{|E_i\cap Q|}{w_i(E_i)^{1/p_i}} = \sup_{E_i\subseteq Q} \frac{|E_i|}{w_i(E_i)^{1/p_i}},
\end{equation*}
where the first supremum is taken over all measurable sets $E_i$ such that $0<w_i(E_i)<\infty$, and the second one is taken over all non-empty measurable sets $E_i\subseteq Q$. Now, in virtue of \cite[Lemma 2.8]{CHK} and 
Kolmogorov's inequalities, we have that 
\begin{equation*}
\Vert \chi_Q w_i ^{-1}\Vert _{L^{p_i ', \infty}(w_i)} \leq \sup_{E_i \subseteq Q} \frac{|E_i|}{w_i(E_i)^{1/p_i}}\leq p_i \Vert \chi_Q w_i ^{-1}\Vert _{L^{p_i ', \infty}(w_i)},
\end{equation*}
and hence, $[ \vec w, \nu ]_{A_{\vec P}^{\mathcal R}} \leq \Vert \vec w, \nu \Vert_{A_{\vec P}^{\mathcal R}} \leq p_1 \dots p_m [ \vec w, \nu ]_{A_{\vec P}^{\mathcal R}}$.

Note that a similar argument to the one in the proof of Theorem \ref{sparsemax} shows that for $0< \varepsilon \leq 1$ such that $\varepsilon <p$, and $r\geq 1$ such that $\nu \in A_r^{\mathcal R}$,
\begin{equation}\label{eqmultiaprweights1}
\Vert \mathcal M _{\mathcal S} (\vec f)\Vert_{L^{p,\infty}(\nu)} \leq \Vert \mathcal A_{\mathcal S} (|\vec f|)\Vert_{L^{p,\infty}(\nu)} \leq C_{\varepsilon,\eta,n,p,r} [\nu]_{A_r^{\mathcal R}} ^{r/\varepsilon} \Vert \mathcal M _{\mathcal S} (\vec f)\Vert_{L^{p,\infty}(\nu)},
\end{equation}
where
\begin{equation*}
\mathcal M_{\mathcal S}(\vec f):=  \sup_{Q\in \mathcal S} \left(\prod_{i=1} ^m \fint_Q |f_i| \right)\chi_Q,
\end{equation*}
and since $\mathcal S$ is a countable collection of dyadic cubes, the proof of Theorem \ref{frac} can be rewritten to show that
\begin{equation*}
\mathcal M_{\mathcal S}:L^{p_1,1}(w_1)\times \dots \times L^{p_m,1}(w_m) \longrightarrow L^{p,\infty}(\nu)
\end{equation*}
if, and only if 
\begin{equation*}
[\vec w,\nu]_{A_{\vec P, \mathcal S}^{\mathcal R}}:= \sup_{Q\in \mathcal S} \nu(Q)^{1/p} \prod_{i=1} ^m \frac{\Vert \chi_Q w_i ^{-1} \Vert_{L^{p_i',\infty}(w_i)}}{|Q|} < \infty,
\end{equation*}
which is true, since $[\vec w,\nu]_{A_{\vec P, \mathcal S}^{\mathcal R}} \leq [\vec w,\nu]_{A_{\vec P}^{\mathcal R}}<\infty$. Moreover,
\begin{equation*}
\frac{1}{\prod_{i=1}^m p_i} [\vec w,\nu]_{A_{\vec P, \mathcal S}^{\mathcal R}} \leq \Vert \mathcal M_{\mathcal S} \Vert_{\prod_{i=1}^m L^{p_i,1}(w_i)\rightarrow L^{p,\infty}(\nu)} \leq [\vec w,\nu]_{A_{\vec P, \mathcal S}^{\mathcal R}},
\end{equation*}
so (\ref{eqmultiaprweights1}) implies that
\begin{equation}\label{eqmultiaprweights2}
\frac{1}{\prod_{i=1}^m p_i} [\vec w,\nu]_{A_{\vec P, \mathcal S}^{\mathcal R}} \leq \Vert \mathcal A_{\mathcal S} \Vert_{\prod_{i=1}^m L^{p_i,1}(w_i)\rightarrow L^{p,\infty}(\nu)} \leq C_{\varepsilon,\eta,n,p,r} [\nu]_{A_r^{\mathcal R}} ^{r/\varepsilon}  [\vec w,\nu]_{A_{\vec P, \mathcal S}^{\mathcal R}}.
\end{equation}

Finally, in virtue of Theorem 1.2 and Proposition 3.1 in \cite{li1} (see also \cite[Theorem 3.1]{lernerak}), if $T$ is an $m$-linear $\omega$-Calder\'on-Zygmund operator with $\omega$ satisfying the Dini condition, then there exists a dimensional constant $0<\eta <1$ such that given compactly supported functions $f_i \in L^1(\mathbb R^n)$, $i=1,\dots,m$, there exists an $\eta$-sparse collection of dyadic cubes $\mathcal S$ such that   
\begin{equation*}
    |T(f_1,\dots,f_m)| \leq c_n C_T \mathcal A_{\mathcal S}(|\vec f|).
\end{equation*}
Hence, (\ref{eqmultiaprweightsc}) follows from (\ref{eqmultiaprweights2}) and the standard density argument in \cite[Exercise 1.4.17]{grafclas}. Moreover,
\begin{equation*}
    \Vert T \Vert_{\prod_{i=1}^m L^{p_i,1}(w_i)\rightarrow L^{p,\infty}(\nu)} \leq c_n C_T C_{\varepsilon,\eta,n,p,r} [\nu]_{A_r^{\mathcal R}} ^{r/\varepsilon}  [\vec w,\nu]_{A_{\vec P}^{\mathcal R}}.
\end{equation*}
\end{proof}

\begin{remark}
Given weights $w_1,\dots,w_m$, and $\nu=\prod_{i=1}^m w_i^{p/p_i}$, the equivalence between (b) and (c) in Theorem~\ref{multiaprweights} can be found in \cite{quico}. Moreover, if $p_1=\dots=p_m=1$, then the equivalence between (a) and (d) can be found in \cite{LOPTT}. Observe that if $\vec w \in A_{\vec P}$, then $(w_1,\dots,w_m,\nu_{\vec w})\in A_{\vec P}^{\mathcal R}$. 
In \cite{LOPTT}, strong and weak type bounds for $m$-linear Calder\'on-Zygmund operators were established for the first time for tuples of weights in $A_{\vec P}$. In \cite{lu}, these results were extended to $m$-linear $\omega$-Calder\'on-Zygmund operators with $\Vert \omega\Vert_{\text{Dini}}<\infty$.
\end{remark}

We can now state our main conjecture on Sawyer-type inequalities with $A_{\vec P}^{\mathcal R}$ weights, a complete multi-variable version of Theorem~\ref{sawyer} for $\mathcal M$.

\begin{conjecture}\label{conjmsawyer}
Let $1\leq p_1,\dots, p_m <\infty $, and let $\frac{1}{p}=\frac{1}{p_1}+ \dots + \frac{1}{p_m}$. 
Let $w_1, \dots,w_m$, and $\nu$ be weights, and suppose that $(w_1,\dots,w_m,\nu)\in A_{\vec P}^{\mathcal R}$. Let $v$ be a weight such that $\nu v^p\in A_{\infty}$. Then, there exists a constant $C>0$ such that the inequality
\begin{equation}\label{eqconjmsawyer}
\left \| \frac{\mathcal M (\vec f)}{v} \right \|_{L^{p,\infty}(\nu v^p)} \leq C \prod_{i=1}^m \left \| f_i \right \|_{L^{p_i,1}(w_i)}
\end{equation}
holds for every vector of measurable functions $\vec f=(f_1,\dots,f_m)$.
\end{conjecture}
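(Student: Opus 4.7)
The plan is to adapt the level-set decomposition used in the proof of Theorem~\ref{msawyer}, replacing the individual applications of Theorem~\ref{sawyer} (which require the unavailable individual conditions $w_i\in A_{p_i}^{\mathcal R}$ and $w_iv^{p_i}\in A_\infty$) with a direct use of the joint $A_{\vec P}^{\mathcal R}$ condition via the Besicovitch-type covering argument that proves Theorem~\ref{frac}. First I would reduce to the centered version $\mathcal M^c$ using $\mathcal M\leq 2^{nm}\mathcal M^c$, fix $y,R>0$, and consider the annular level set $E_y^R := \{x\in\mathbb R^n : |x|<R,\ yv(x) < \mathcal M^c(\vec f)(x) \leq 2yv(x)\}$, which is open by the lower semicontinuity used in Theorem~\ref{frac}. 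For each $x\in E_y^R$ there is an open centered cube $Q_x$ with $\prod_{i=1}^m \fint_{Q_x} |f_i| > yv(x)$, and by \cite[Lemma 7.1.10]{grafclas} any compact $K\subseteq E_y^R$ can be covered by a finite subcollection $\{Q_j\}_{j=1}^N$ with $\sum_j \chi_{Q_j}\leq 72^n$.

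For each $Q_j$, Hölder's inequality for Lorentz spaces combined with the $A_{\vec P}^{\mathcal R}$ characterization from Theorem~\ref{multiaprweights} yields
\begin{equation*}
\nu(Q_j)\left(\prod_{i=1}^m \fint_{Q_j}|f_i|\right)^p \leq [\vec w,\nu]_{A_{\vec P}^{\mathcal R}}^p \prod_{i=1}^m \|f_i\chi_{Q_j}\|_{L^{p_i,1}(w_i)}^p.
\end{equation*}
Summing over $j$ via discrete Hölder's inequality with exponents $p_i/p$ and \cite[Lemma 2.5]{CHK} would then give the desired bound $\nu v^p(K) \lesssim y^{-p}\prod_i \|f_i\|_{L^{p_i,1}(w_i)}^p$, provided that one can pass from $\nu(Q_j)(\prod_i \fint_{Q_j}|f_i|)^p$ on the left to $y^p\,\nu v^p(Q_j\cap E_y^R)$. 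Using the upper bound $v\leq \mathcal M^c(\vec f)/y$ on $E_y^R$, this reduces to dominating $\int_{Q_j\cap E_y^R}\mathcal M^c(\vec f)^p\,\nu$ by $(\prod_i \fint_{Q_j}|f_i|)^p\nu(Q_j)$. Once this step is settled, letting $K\nearrow E_y^R$, then $R\to\infty$, and finally decomposing $\{\mathcal M(\vec f)/v>y\}=\bigsqcup_{k\geq 0}E_{2^ky}^{\infty}$ dyadically delivers \eqref{eqconjmsawyer}.

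The hardest step is precisely this pointwise-to-integral passage inside each covering cube: the selection of $Q_j$ gives control of $\prod_i\fint_{Q_j}|f_i|$ only relative to $v$ at the center of $Q_j$, whereas $\mathcal M^c(\vec f)$ can a priori be much larger at other points of $Q_j$. In Theorem~\ref{msawyer} this obstacle was avoided by applying Theorem~\ref{sawyer} to each $Mf_i/v_i$ separately, exploiting the individual conditions on the $w_i$'s and the fact that $\tilde v_i\in RH_\infty$ transfers each $w_iv^{p_i}\in A_\infty$ to $w_iv_i^{p_i}\in A_\infty$; under the strictly joint hypotheses of Conjecture~\ref{conjmsawyer} no such factorization is available, so the whole $A_\infty$ budget resides in the single weight $\nu v^p$. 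A plausible remedy is to combine the $A_\infty$ doubling and reverse Hölder of $\nu v^p$ with a Calderón-Zygmund stopping argument inside each $Q_j$, or, more ambitiously, to develop a genuinely multi-variable analogue of Theorem~\ref{dyadic} for a weighted multilinear dyadic maximal operator $\mathcal M^{\mathscr D}_{\vec u}$ that could play, in the joint setting, the role Theorem~\ref{sawyer} played one factor at a time. Failing such a tool, the argument can at least be closed after strengthening the hypotheses in the spirit of Theorem~\ref{sparsemax}, for instance by additionally assuming $\nu v^{p-\varepsilon}\in A_\infty$ and $[v^{-\varepsilon}]_{RH_\infty(\nu v^p)}<\infty$ for some $0<\varepsilon\leq 1$ with $\varepsilon<p$, which in combination with sparse domination of $\mathcal M$ bridges the remaining gap and reflects exactly the technical conditions already appearing in Theorem~\ref{czo}.
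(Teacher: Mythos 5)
This statement is Conjecture~\ref{conjmsawyer}: the paper gives no proof of it, only the observations that it holds when $p_1=\dots=p_m=1$ and $\nu=\nu_{\vec w}$ (by \cite[Theorem 1.5]{kob}), that it is elementary when all $p_i>1$, $\vec w\in A_{\vec P}$ and $\nu=\nu_{\vec w}$, and a tentative reduction to the operator $\mathcal N^{\theta}_{\vec w,\nu}$ that is explicitly left open. So the question is whether your argument settles the conjecture, and it does not: you yourself isolate the missing step, and that step is precisely where the whole difficulty sits. In the covering argument imported from Theorem~\ref{frac}, the selected centered cube $Q_j$ only guarantees $\prod_{i=1}^m\fint_{Q_j}|f_i|>y\,v(x_j)$ at its center $x_j$, whereas what must be summed is $\nu v^p(Q_j\cap E_y^R)$. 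When $v\equiv 1$ the threshold is constant and $\nu(Q_j)$ suffices, which is why Theorem~\ref{frac} goes through; for general $v$ the factor $v^p$ may be far larger on the rest of $Q_j$, and the joint condition $[\vec w,\nu]_{A_{\vec P}^{\mathcal R}}<\infty$ gives no control of $\int_{Q_j\cap E_y^R}\mathcal M^c(\vec f)^p\,\nu$ by $\nu(Q_j)\bigl(\prod_{i=1}^m\fint_{Q_j}|f_i|\bigr)^p$. You correctly note that in Theorem~\ref{msawyer} this obstruction is bypassed by factoring $M^{\otimes}$ and applying Theorem~\ref{sawyer} one variable at a time, using $\tilde v_i\in RH_\infty$ to get $w_iv_i^{p_i}\in A_\infty$; under purely joint hypotheses no such factorization is available, so your argument halts exactly at the conjectural step.

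The proposed fallback does not close this gap either. Theorem~\ref{sparsemax} estimates $\bigl\|\mathcal A_{\mathcal S}(\vec f)/v\bigr\|_{L^{p,\infty}(w)}$ \emph{in terms of} $\bigl\|\mathcal M(\vec f)/v\bigr\|_{L^{p,\infty}(w)}$, so combining it with a pointwise sparse domination of $\mathcal M$ merely returns you to the quantity you are trying to bound: the scheme is circular for $\mathcal M$ itself. The additional hypotheses $\nu v^{p-\varepsilon}\in A_\infty$ and $[v^{-\varepsilon}]_{RH_\infty(\nu v^p)}<\infty$ are exactly what Theorem~\ref{czo} uses to \emph{transfer} a mixed bound for $\mathcal M$ to sparse operators and Calder\'on--Zygmund operators; they do not produce the bound for $\mathcal M$, with or without them the conjecture remains open. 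For comparison, the paper's own (equally unproved) suggestion is different in spirit: majorize $\mathcal M(\vec f)$ by $[\vec w,\nu]_{A_{\vec P}^{\mathcal R}}\,\mathcal N^{\theta}_{\vec w,\nu}(\vec f)^{1/\theta}$ and attempt to adapt the argument of \cite[Theorem 1.5]{kob} to that operator, rather than rerunning the covering proof of Theorem~\ref{frac} with a variable threshold.
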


\begin{remark}
This conjecture is true in the case when $p_1=\dots=p_m=1$ and $\nu = \nu_{\vec w}$, as shown in \cite[Theorem 1.5]{kob}. We don't know if the hypothesis that $\nu v^p\in A_{\infty}$ can be replaced by $v^\delta \in A_\infty$ for some $\delta>0$.
\end{remark}

In virtue of H\"older's inequality, if $\vec w \in \prod_{i=1}^m A_{p_i}^{\mathcal R}$, then $(w_1,\dots,w_m,\nu_{\vec w})\in A_{\vec P}^{\mathcal R}$, so this conjecture extends the result for $\mathcal M$ presented in Theorem~\ref{msawyer}. Also, combining such conjecture with Theorem~\ref{sparsemax}, we would get a generalization of Theorem~\ref{czo} in the line of \cite[Theorem 1.9]{kob}. 

As it happens in the one-dimensional case, the conclusion of Conjecture~\ref{conjmsawyer} is completely elementary if $1<p_1,\dots,p_m<\infty$, $\vec w \in A_{\vec P}$, and $\nu=\nu_{\vec w}$, since
\begin{align*}
\left \| \frac{\mathcal M (\vec f)}{v} \right \|_{L^{p,\infty}(\nu_{\vec w} v^p)} & \leq \left \| \frac{\mathcal M (\vec f)}{v} \right \|_{L^{p}(\nu_{\vec w} v^p)} =\| \mathcal M (\vec f) \|_{L^{p}(\nu_{\vec w})} \\ & \lesssim [\vec w]_{A_{\vec P}}^{\max\{\frac{p_1 '}{p},\dots,\frac{p_m '}{p}\}} \prod_{i=1}^m \Vert f_i \Vert_{L^{p_i}(w_i)}\lesssim [\vec w]_{A_{\vec P}}^{\max\{\frac{p_1 '}{p},\dots,\frac{p_m '}{p}\}} \prod_{i=1}^m \Vert f_i \Vert_{L^{p_i,1}(w_i)},
\end{align*}
where we have used the sharp estimates for $\mathcal M$ proved in \cite[Theorem 1.2]{limosu}.

In the general case, observe that for every $\theta >0$,
\begin{align*}
    \mathcal M (\vec f) & \leq [\vec w,\nu]_{A_{\vec P}^{\mathcal R}} \sup_Q \frac{\chi_Q}{\nu(Q)^{1/p}}\prod_{i=1}^m \Vert f_i \chi_Q \Vert_{L^{p_i,1}(w_i)}\\ & =[\vec w,\nu]_{A_{\vec P}^{\mathcal R}} \left(\sup_Q \frac{\chi_Q}{\nu(Q)^{\theta/p}}\prod_{i=1}^m \Vert f_i \chi_Q \Vert_{L^{p_i,1}(w_i)}^{\theta}\right)^{1/\theta}=:[\vec w,\nu]_{A_{\vec P}^{\mathcal R}}\mathcal N^{\theta}_{\vec w,\nu} (\vec f)^{1/\theta},
\end{align*}
and
$$
\left \| \frac{\mathcal M (\vec f)}{v} \right \|_{L^{p,\infty}(\nu v^p)} \leq [\vec w,\nu]_{A_{\vec P}^{\mathcal R}} 
\left \| \frac{\mathcal N^{\theta}_{\vec w,\nu} (\vec f)}{V} \right \|_{L^{p/\theta,\infty}(\nu V^{p/\theta})}^{1/\theta},
$$
with $V:=v^{\theta}$. We suspect that a wise choice of $\theta$ (maybe $\theta = p$ or $\theta = mp$) and the argument in the proof of Theorem 1.5 in \cite{kob} could lead to some advances towards our conjecture. This idea requires further investigation.


\section{Acknowledgements}

E. R. P. wants to express his sincere gratitude to David V. Cruz-Uribe for his help and efforts when we were trying to weaken the hypotheses of Theorem~\ref{sawyer}.

The authors thank M. J. Carro for her suggestions to improve this document. They also thank the reviewer for the feedback, which led to the release of Section~\ref{section5}.

\end{document}